\newtheorem*{theorem*}{Theorem}
\newtheorem{theorem}{\textbf{Theorem}}[section]
\newtheorem{proposition}[theorem]{\textbf{Proposition}}
\newtheorem{corollary}[theorem]{\textbf{Corollary}}
\newtheorem{lemma}[theorem]{\textbf{Lemma}}
\theoremstyle{definition}
\newtheorem{definition}[theorem]{Definition}
\newtheorem{remark}[theorem]{Remark}
\newtheorem{example}[theorem]{Example}
\newtheorem{conjecture}[theorem]{Conjecture}
\numberwithin{equation}{section}
\DeclareMathOperator{\GL}{GL}
\DeclareMathOperator{\SL}{SL}  
\DeclareMathOperator{\PGL}{PGL}     \DeclareMathOperator{\End}{End} \DeclareMathOperator{\ff}{ff} \DeclareMathOperator{\nff}{nff} \DeclareMathOperator{\cf}{cf} \DeclareMathOperator{\PP}{\mathbb{P}}
\title{Integrability of the Six-Vertex Model and the Yang-Baxter Groupoid}
\author{Slava Naprienko}
\begin{document}

\begin{abstract}
    We study the Yang-Baxter equation for the $R$-matrices of the six-vertex model. We analyze the solutions and give new parametrizations of the Yang-Baxter equation. In particular, we find the maximal commutative families of parametrized solutions which generalize the $R$-matrices from the affine quantum (super)-groups. Then we give a new parametrization of the Yang-Baxter equation by a groupoid of non-free-fermionic matrices. In the appendix, we study the general algebraic structure of the solutions of the Yang-Baxter and formulate a conjecture that extends the conjecture by Brubaker, Bump, and Friedberg that the composition law on the Yang-Baxter solutions is always associative.
\end{abstract}

\maketitle

\section{Introduction}
The six-vertex model is a famous exactly solvable model in statistical physics with a wide range of applications in various fields of mathematics and physics. It was introduced by Pauling in 1935 \cite{Pau35}. 

In \cite{Bax82}, Baxter introduced a new way of solving the six-vertex model based on a local equation for the weights that gives a sufficient condition for the row transfer matrices to commute. Independently, Yang \cite{Y67} introduced the same equation in his work on the exact solution of the related ice-type models. The equation is now called the Yang-Baxter equation (YBE) and is one of the most important equations in exactly solvable models. 

Later, the Yang-Baxter equation was reformulated in the context of the representation theory and the affine quantum (super)-groups \cite{D87}. For the six-vertex model, the relevant quantum groups are the affine quantum group $U_q(\widehat{\mathfrak{sl}}(2))$ and the affine quantum supergroup $U_q(\widehat{\mathfrak{sl}}(1|1))$. They give examples of the solutions parametrized by $\mathbb{C}$ and $\mathbb{C}^\times$ that we revisit in \Cref{example:quantumgroups}.

Another source of the parametrized solutions comes from the direct analysis of the Yang-Baxter equation. In \cite{KBI93, BS82}, an example of parametrized Yang-Baxter equation by a non-commutative group $\SL_2(\mathbb{C})$ was given. In \cite{BBF11}, authors extended the result to a larger group $\GL_2(\mathbb{C}) \times \GL_1(\mathbb{C})$. We revisit their result in \Cref{sec:pybe}.

In the present paper, we study the parametrized Yang-Baxter equation for the six-vertex model and give new parametrized solutions. In particular, we find the maximal commutative families of parametrized solutions that generalize the $R$-matrices from the affine quantum (super)-groups. They also include the combinatorial five-vertex matrices, which could be considered as the combinatorial $R$-matrices of the crystallization of the corresponding affine quantum groups. We discuss it in \Cref{example:quantumgroups} and \Cref{example:fivevertex}.

\begin{theorem*}
    For any constants $q_1,q_2 \in \mathbb{C}$, and $\beta \in \mathbb{C}$, we have the parametrized Yang-Baxter equations
    \begin{align}
        [[R^{\cf}_{q_1,q_2;\beta}(g),R^{\cf}_{q_1,q_2;\beta}(gh),R^{\cf}_{q_1,q_2;\beta}(h)]], \quad g,h \in (\mathbb{C}^\times)^3,\\
        [[R^{\ff}_{q_1,q_2;\beta}(g),R^{\ff}_{q_1,q_2;\beta}(gh),R^{\ff}_{q_1,q_2;\beta}(h)]], \quad g,h \in (\mathbb{C}^\times)^3,
    \end{align}
    where the maps $R_{q_1,q_2;\beta}^{\cf}$ and $R_{q_1,q_2;\beta}^{\ff}$ are defined by \Cref{eq:rmatrices}.
\end{theorem*}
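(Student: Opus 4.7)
The plan is to reduce both parametrized YBEs to finite systems of polynomial identities and then exploit the multiplicative structure of the parameter group $(\mathbb{C}^\times)^3$. The bracket $[[R(g), R(gh), R(h)]]$ denotes the usual operator equation $R_{12}(g)\, R_{13}(gh)\, R_{23}(h) = R_{23}(h)\, R_{13}(gh)\, R_{12}(g)$ on $V^{\otimes 3}$, which unfolds into one scalar identity for each compatible triple of six-vertex boundary states on the three-strand diagram. My first step would be to enumerate these identities using the standard preservation-of-state constraint, reducing to a manageable collection of polynomial relations in the twelve nonzero entries of the three $R$-matrices.

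Next I would substitute the explicit formulas from \Cref{eq:rmatrices}. The crucial structural feature is that every Boltzmann weight of $R_{q_1,q_2;\beta}^{\cf}(g)$ or $R_{q_1,q_2;\beta}^{\ff}(g)$ is a Laurent monomial in the coordinates of $g = (g_1, g_2, g_3)$ with coefficients depending only on $q_1, q_2, \beta$. Since the coordinate-wise group law $(gh)_i = g_i h_i$ forces the weights of $R(gh)$ to factor as products of the corresponding weights of $R(g)$ and $R(h)$, each YBE identity splits into a common monomial in $(g_i, h_i)$ times a residual identity in $q_1, q_2, \beta$ alone. In more conceptual terms, one is reducing the cube-form YBE to the statement that $\{R(g) : g \in (\mathbb{C}^\times)^3\}$ forms a commutative family in the single-auxiliary-space sense, from which the three-slot YBE follows by the standard braiding lemma.

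The final task is to verify the residual $q_1, q_2, \beta$-identities for the two branches. The free-fermionic case $R^{\ff}$ should be the easier one: by construction it satisfies the free-fermion relation, and the parametrization can be identified with a commutative subgroup of the Brubaker--Bump--Friedberg $\GL_2(\mathbb{C}) \times \GL_1(\mathbb{C})$-family revisited in \Cref{sec:pybe}, so the YBE is inherited from the classical BBF result. The main obstacle is the $\cf$-branch: being non-free-fermionic, $R^{\cf}$ does not embed into the BBF family, and the residual identities have to be checked directly from the explicit entries — or, equivalently, by verifying pairwise commutativity $R^{\cf}(g) R^{\cf}(h) = R^{\cf}(h) R^{\cf}(g)$ of the family in a single auxiliary space and invoking the maximality of the commutative family asserted earlier in the paper to reach the full three-slot statement. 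Once these residual checks are complete, the parametrized YBEs follow by recombining with the cancelling $(g_i h_i)$ monomials.
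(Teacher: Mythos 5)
Your reduction hinges on the claim that every Boltzmann weight of $R^{\cf}_{q_1,q_2;\beta}(g)$ is a Laurent \emph{monomial} in the coordinates of $g$, so that the weights of $R(gh)$ factor as products of the corresponding weights of $R(g)$ and $R(h)$ and each YBE identity splits off a common monomial. This is false for generic $q_1,q_2$: from \Cref{eq:rmatrices} one has $a_1(R(g)) = q_1 z_1 - q_2 z_2$, a binomial, and $a_1(R(gh)) = q_1 z_1 z_1' - q_2 z_2 z_2'$ is \emph{not} $a_1(R(g))\,a_1(R(h))$. The correct relation is $a_1(R(g))a_1(R(h)) - b_1(R(h))b_2(R(g)) = (q_1-q_2)\,a_1(R(gh))$, i.e.\ exactly the composition law of \Cref{thm:ybequation}(1), with cross terms that your monomial splitting cannot see. (Only the five-vertex degenerations $q_1=0$ or $q_2=0$ are monomial.) Your fallback for the $\cf$-branch also does not close the gap: verifying $R^{\cf}(g)R^{\cf}(h) = R^{\cf}(h)R^{\cf}(g)$ as $4\times 4$ matrices is not equivalent to, and does not imply, the three-slot equation on $V^{\otimes 3}$ — ``commutative solution'' in \Cref{def:commutative} means $[[u,w,v]]=[[v,w,u]]=0$, not matrix commutation, and the maximality statement classifies which matrices can belong to a commutative family; it is not an engine that produces the YBE. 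There is no ``standard braiding lemma'' deriving the YBE from pairwise commutation of a family in a single auxiliary space.

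The argument that does work, and is what the paper's ``direct calculation'' amounts to, runs through \Cref{thm:ybequation}: first check that $u=R(g)$, $v=R(h)$ satisfy the compatibility condition of case (1) — this is automatic because, by construction from \eqref{eq:ffparametrization} and \eqref{eq:cfparametrization}, all matrices in the family share the same value of $\Delta^{\ff}$ (resp.\ $\Delta^{\cf}$) and the same ratio $b_2/b_1$ — and then substitute into the explicit formulas for $w=u*v$ to find that every entry equals $(q_1-q_2)$ times the corresponding entry of $R(gh)$, so $[R(g)*R(h)] = [R(gh)]$ and the homogeneous equation $[[R(g),R(gh),R(h)]]=0$ follows. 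Your treatment of the $\ff$-branch is salvageable: the map sending $g$ to the pair $\bigl(\begin{smallmatrix} a_1 & b_2 \\ -b_1 & a_2\end{smallmatrix}\bigr)\times(c_1)$ is a projective homomorphism into $\GL_2(\mathbb{C})\times\GL_1(\mathbb{C})$ (the verification is the same $(q_1-q_2)$-scalar computation), after which Theorem~3.1 applies; but for the $\cf$-branch there is no such shortcut and the substitution into \Cref{thm:ybequation} must be carried out.
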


Our main result is the parametrized Yang-Baxter equation, where the parameter space is not a group, but a \textit{groupoid} consisting of non-free-fermionic six-vertex matrices. To our knowledge, it is the first example of the parametrized Yang-Baxter equation with a non-group parameter space.

\begin{theorem*}
    We have the parametrized Yang-Baxter equation
    \[
        [[\pi(g), \pi(g*h), \pi(h)]] = 0, \quad g,h \in G_{\nff},
    \]
    where the non-free-fermionic groupoid $G_{\nff}$ is defined in \Cref{sec:groupoid}.
\end{theorem*}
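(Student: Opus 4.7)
The plan is to establish this parametrized YBE by constructing $G_{\nff}$ and $\pi$ explicitly and then reducing the YBE to a finite algebraic check. The guiding principle is that the groupoid structure should come directly from the YBE itself: two morphisms $g, h$ are \emph{composable} precisely when the YBE system $[[\pi(g), X, \pi(h)]] = 0$, viewed as a polynomial system in the six entries of the middle matrix $X$, admits a solution, and $\pi(g * h)$ is defined to be that solution.

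First I would parametrize a general six-vertex $R$-matrix by its six nonzero Boltzmann weights $a_1, a_2, b_1, b_2, c_1, c_2$. The free-fermion locus is the hypersurface $a_1 a_2 + b_1 b_2 = c_1 c_2$; off this locus, one loses the group parametrization of Brubaker-Bump-Friedberg and obtains instead a groupoid. The key step is identifying the objects of $G_{\nff}$ as the equivalence classes of boundary invariants of a non-free-fermionic matrix (analogous to the spectral data of an $R$-matrix coming from a quantum group), and the composability condition on $(g, h)$ as the matching of the target invariant of $g$ with the source invariant of $h$.

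Given this setup, verifying $[[\pi(g), \pi(g*h), \pi(h)]] = 0$ reduces to an explicit polynomial check. I expect the calculation to proceed either by direct expansion of the YBE equations in the chosen coordinates, using the non-free-fermionic condition to invert certain quantities that would vanish in the free-fermion limit, or by a degeneration argument that specializes the parametrized YBE over $(\mathbb{C}^\times)^3$ from the previous theorem and then extends the resulting identity off the locus where that parametrization is defined. Comparison with the two commutative families $R^{\cf}_{q_1,q_2;\beta}$ and $R^{\ff}_{q_1,q_2;\beta}$ should serve as a sanity check: these families ought to sit inside $G_{\nff}$ as distinguished subgroupoids on which the composition $*$ reduces to the multiplication in $(\mathbb{C}^\times)^3$.

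The main obstacle, I expect, is not the YBE identity itself but the \emph{well-definedness} of the groupoid composition. One must verify that $g * h$ is uniquely determined (the middle YBE solution is unique for generic outer data), that $*$ is associative — this is essentially the content of the associativity conjecture alluded to in the abstract — and that every morphism has a two-sided inverse with appropriate source and target. Uniqueness of the middle matrix is itself a non-trivial claim about the YBE as a polynomial system, but once established, associativity should follow from a triple-YBE consistency argument and the existence of inverses from the transposition symmetry of the YBE.
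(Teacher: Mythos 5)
Your overall strategy matches the paper's: the composition $g*h$ is defined by solving $[[\pi(g),X,\pi(h)]]=0$ for the middle matrix, composability is detected by matching invariants (the paper uses $\Delta=(\Delta_1,\Delta_2)$ with the condition $\Delta(u)=\Delta(v^{-1})$), and the displayed Yang--Baxter identity is then essentially tautological once existence and uniqueness of the middle solution are established --- which is exactly what \Cref{thm:ybequation} supplies by reducing the YBE to a list of coefficient equations and solving them explicitly. Two caveats. First, you propose to obtain associativity from ``a triple-YBE consistency argument''; that argument is precisely the Brubaker--Bump--Friedberg heuristic, which the paper explicitly treats as an unproven conjecture, and associativity is instead proved in \Cref{lem:associativity} by direct substitution of the explicit formulas for the entries of $u*v$ (in fact the paper proves the stronger statement that $u*v$ and $v*w$ being defined already forces $(u*v)*w$ and $u*(v*w)$ to be defined). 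Since associativity is needed for $G_{\nff}$ to be a groupoid --- though not for the displayed YBE identity itself --- you should not lean on the heuristic at that step. Second, your sanity check is half wrong: the constant-field family $R^{\cf}_{q_1,q_2;\beta}$ does sit inside $G_{\nff}$ as a vertex group, but the free-fermionic family $R^{\ff}_{q_1,q_2;\beta}$ is excluded by definition, since $G_{\nff}$ consists of matrices with $\Delta\neq(0,0)$; the paper notes that the free-fermionic matrices would form an isolated vertex, with vertex group $\GL_2(\mathbb{C})\times\GL_1(\mathbb{C})$, if adjoined.
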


When the parametrization is restricted to the vertex groups in the groupoid, we get the commutative parametrization from the previous result. However, the unique feature of the groupoid structure is the interactions between the different vertex groups. In particular, associativity is a non-trivial property as it does not come from a group structure as in previous examples. 

In Section 2 of \cite{BBF11}, authors conjectured that the composition law on the solutions of the Yang-Baxter equations is always associative. They give a heuristic argument suggesting that a composition law is always associative. This conjecture is supported by examples of the parametrized solutions which satisfy associativity. Our result provides additional evidence to the conjecture of Brubaker, Bump, and Friedberg. In fact, we prove even more: if $u*v$ and $v*w$ are defined, then so is $u*v*w$. It motivates the extended conjecture that we formulate in \Cref{conj:associativity}.

\subsection*{Acknowledgments}
I am very grateful to Daniel Bump for all his support and guidance throughout this project. Thank you!

\section{Yang-Baxter equation for the six-vertex model}
The six-vertex model in statistical mechanics can be described algebraically in terms of the matrices of weights for each vertex. See \cite{BBF11}, Section 1 for details. We study the Yang-Baxter equation for the matrices which arise from the six-vertex model.

\begin{definition}
    A \textit{six-vertex matrix} $u \in \GL_4(\mathbb{C})$ is an invertible $4 \times 4$ matrix of the form 
    \begin{equation}\label{eq:sixvertex}
         u = \begin{pmatrix}
            a_1(u) & & & \\
            & c_1(u) & b_1(u) &\\
            & b_2(u) & c_2(u) &\\
            & & & a_2(u)
        \end{pmatrix}.
    \end{equation}
    The corresponding projective matrix $[u] \in \PGL_4(\mathbb{C})$ is called a \textit{projective six-vertex matrix}. 
\end{definition}

Let $V \cong \mathbb{C}^2$ with the standard basis $e_1,e_2$. We view a six-vertex matrix as a matrix of an operator $u \in \End(V \otimes V)$ in basis $e_1 \otimes e_1, e_1 \otimes e_2, e_2 \otimes e_1, e_2 \otimes e_2$. By abuse of notation, we denote by $u$ both the six-vertex matrix and the corresponding operator. 

The determinant of a six-vertex matrix $u \in S$ is given by 
\[
    \det(u) = a_1(u)a_2(u)(c_1(u)c_2(u) - b_1(u)b_2(u)).
\]

We define the dual six-vertex matrix $u^* \in S$ of a matrix $u \in S$ as the adjugate matrix $u^* = \det(u)u^{-1}$. Explicitly, $u^*$ is given by 
\[
    u^* = \begin{pmatrix}
        a_1^*(u) & & & \\
        & c_1^*(u) & b_1^*(u) &\\
        & b_2^*(u) & c_2^*(u) &\\
        & & & a_2^*(u)
    \end{pmatrix} = \begin{pmatrix}
        a_1^*(u) & & & \\
        & c_2(u) & -b_1(u) &\\
        & -b_2(u) & c_1(u) &\\
        & & & a_2^*(u)
    \end{pmatrix},
\]
where $a_1^*(u)$ and $a_2^*(u)$ are defined by 
\[
    c_1(u)c_2(u) - b_1(u)b_2(u) = a_1(u)a_1^*(u) = a_2(u)a_2^*(u).
\]
Note that if $u$ is projective, then $u^* = u^{-1}$. 

Let $V \cong \mathbb{C}^2$. For $u,w,v \in \End(V \otimes V)$, define the \textit{Yang-Baxter commutator} on $V \otimes V \otimes V$:
\begin{equation}\label{eq:ybcommutator}
    [[u,w,v]] = (u \otimes 1)(1 \otimes w)(v \otimes 1) - (1 \otimes v)(w \otimes 1)(1 \otimes u).
\end{equation}

Then the \textit{Yang-Baxter equation} is given by 
\begin{equation}\label{eq:ybe}
    [[u,w,v]] = 0, \quad u,w,v \in \End(V \otimes V).
\end{equation}

Let $S$ be the set of the six-vertex matrices with $c_1,c_2 \neq 0$. Note that since the six-vertex matrices are invertible, we have $a_1,a_2 \neq 0$. Let $S^\times$ be the subset with $b_1,b_2 \neq 0$. 

We now give the complete solution to the Yang-Baxter equation for the six-vertex matrices.

\begin{theorem}\label{thm:ybequation}
    Let $u,w,v \in S$ be the six-vertex matrices. Then the Yang-Baxter equation $[[u,w,v]] = 0$ holds if and only if one of the following equivalent cases holds. Moreover, the solutions are unique up to a constant.
    
    \begin{enumerate}
        \item Solving for $w$ in terms of $u$ and $v$. If the following condition holds:
        \begin{gather*}
            \begin{aligned}\label{eq:conditionforybcomposition}
                (a_1^*(u)-a_2(u))b_1(v) &= (a_2^*(v)-a_1(u))b_1(u),\\
                (a_1^*(v)-a_2(v))b_2(u) &= (a_2^*(u)-a_1(u))b_2(v),
            \end{aligned}
        \end{gather*}
        then $w$ is given by 
        \begin{align*}
            a_1(w) &= a_1(u)a_1(v) - b_1(v)b_2(u),\\
            a_2(w) &= a_2(u)a_2(v) - b_1(u)b_2(v),\\
            b_1(w) &= a_1^*(u)b_1(v) - a_1(v)b_1^*(u)\\
            &= a_2^*(v)b_1(u) - a_2(u)b_1^*(v),\\
            b_2(w) &= a_1^*(v)b_2(u) - a_1(u)b_2^*(v)\\
            &= a_2^*(u)b_2(v) - a_2(v)b_2^*(u),\\
            c_1(w) &= c_1(u)c_1(v),\\
            c_2(w) &= c_2(u)c_2(v).
        \end{align*}
        The condition ensures that the expressions for $b_1(w),b_2(w)$ agree.
        
        \item Solving for $u$ in terms of $w$ and $v$. If the following condition holds:
        \begin{gather*}
            \begin{aligned}
                (a_2(w)-a_1^*(w))b_1(v) &= (a_2(v)-a_1^*(v))b_1(w),\\
                (a_2(v)-a_1^*(w))b_2(v) &= (a_2^*(v)-a_1(v))b_2(w).
            \end{aligned}
        \end{gather*}
        
        then $u$ is given by 
        
        \begin{align*}
            a_1(u) &= a_1(w)a_1^*(v) + b_1(v)b_2(w),\\
            a_2(u) &= a_2(w)a_2^*(v) + b_1(w)b_2(v),\\
            b_1(u) &= b_1(w)a_2(v) - b_1(v)a_2(w),\\
            &= b_1(w)a_1^*(v) - b_1(v)a_1^*(w),\\
            b_2(u) &= a_1(v)b_2(w) - a_1(w)b_2(v),\\
            &= a_2^*(v)b_2(w) - a_2^*(w)b_2(v),\\
            c_1(u) &= c_1(w)c_2(v),\\
            c_2(u) &= c_1(v)c_2(w).
        \end{align*}
        The condition ensures that the expressions for $b_1(u),b_2(u)$ agree.
        
        \item Solving for $v$ in terms of $u$ and $w$. If the following condition holds:
        \begin{gather*}
            \begin{aligned}
                (a_1(u)-a_2^*(u))b_1(w) &= (a_1(w)-a_2^*(w))b_1(u),\\
                (a_2(u)-a_1^*(u))b_2(w) &= (a_2(w)-a_1^*(w))b_2(u).
            \end{aligned}
        \end{gather*}
        
        then $v$ is given by
        \begin{align*}
            a_1(v) &= a_1(w)a_1^*(u) + b_1(w)b_2(u),\\
            a_2(v) &= a_2(w)a_2^*(u) + b_1(u)b_2(w),\\
            b_1(v) &= b_1(w)a_1(u) - b_1(u)a_2(w),\\
            &= b_1(w)a_2^*(u) - b_1(u)a_2^*(w),\\
            b_2(v) &= a_2(u)b_2(w) - a_2(w)b_2(u),\\
            &= a_1^*(u)b_2(w) - a_1^*(w)b_2(u),\\
            c_1(v) &= c_1(w)c_2(u),\\
            c_2(v) &= c_1(u)c_2(w).
        \end{align*}
        The condition ensures that the expressions for $b_1(v),b_2(v)$ agree.
    \end{enumerate}
\end{theorem}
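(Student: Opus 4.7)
The plan is to compute the Yang-Baxter commutator $[[u,w,v]]$ as an $8 \times 8$ operator on $V^{\otimes 3}$ and analyze it block-by-block using the grading by number of $e_2$ factors, which every six-vertex matrix preserves. This decomposes $V^{\otimes 3}$ into weight sectors of dimensions $1,3,3,1$. The two extreme sectors yield only trivial scalar identities ($a_1(u)a_1(w)a_1(v) = a_1(v)a_1(w)a_1(u)$, and similarly with $a_2$), so all the content lives in the two middle $3 \times 3$ blocks.

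For part (1), fix $u$ and $v$ and regard the entries of $w$ as unknowns. I would write out the weight-$1$ block of each of $(u \otimes 1)(1 \otimes w)(v \otimes 1)$ and $(1 \otimes v)(w \otimes 1)(1 \otimes u)$; each entry is a short polynomial. Matching the diagonal entries in the commutator immediately forces $c_1(w) = c_1(u)c_1(v)$ and $c_2(w) = c_2(u)c_2(v)$, using $c_1(u), c_1(v), c_2(u), c_2(v) \ne 0$. The three off-diagonal positions of the weight-$1$ block are linear in the remaining unknowns $a_1(w), a_2(w), b_1(w), b_2(w)$: two of the resulting equations determine $a_1(w)$ and $a_2(w)$ (using the identity $a_i(u)a_i^*(u) = c_1(u)c_2(u) - b_1(u)b_2(u)$ to repackage the mixed products into the stated form), while the remaining pair of off-diagonal positions produces two a priori different expressions for each of $b_1(w)$ and $b_2(w)$. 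A direct manipulation shows these two expressions agree if and only if the displayed compatibility conditions on $u$ and $v$ hold.

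Once $w$ is defined by these formulas, it remains to check that the weight-$2$ block is also annihilated. This is the main bookkeeping step, but it follows from the symmetry $(u,w,v) \mapsto (u^*, w^*, v^*)$: this substitution preserves the six-vertex form, interchanges the two middle weight sectors, and sends the formulas and compatibility conditions for $w$ to their ``dual'' versions, which the construction already satisfies. Uniqueness up to a scalar follows because, after fixing $c_1(w)$ and $c_2(w)$, every remaining entry of $w$ is pinned down linearly by the off-diagonal equations.

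Parts (2) and (3) then follow from (1) by the symmetries of the Yang-Baxter commutator. Specifically, $[[u,w,v]]=0$ is equivalent, via the transposition-reversal of tensor factors and passage to adjugates, to a Yang-Baxter equation with the roles of $u,v,w$ cyclically permuted and appropriately dualized; reading the formulas and compatibility conditions of (1) through this identification gives (2) and (3). The main obstacle is the sheer bookkeeping: roughly eighteen off-diagonal polynomial equations in many monomials, and verifying that the two distinct expressions for each of $b_1(w)$ and $b_2(w)$ coincide \emph{exactly} under the stated compatibility conditions --- no more, no less --- and that the same $w$ solves the weight-$2$ equations without any additional constraint beyond the compatibility already imposed. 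The algebraic key that makes these reconciliations work is the single identity $a_i a_i^* = c_1 c_2 - b_1 b_2$.
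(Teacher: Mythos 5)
Your overall strategy (expand $[[u,w,v]]$ into matrix coefficients, use the weight grading by the number of $e_2$ factors, normalize the $c$'s, and deduce parts (2) and (3) from part (1) via the inverse symmetries of the Yang--Baxter commutator) is the same as the paper's, which simply lists all thirteen component equations and solves them. However, your accounting of where the content sits inside the two middle $3\times 3$ blocks is wrong, and the error is not cosmetic. The weight-$1$ sector (spanned by $e_2\otimes e_1\otimes e_1$, $e_1\otimes e_2\otimes e_1$, $e_1\otimes e_1\otimes e_2$) never sees the entry $a_2$ of any of the three matrices, since $a_2$ only acts on $e_2\otimes e_2$. So that block cannot determine $a_2(w)$, and in fact, once $c_1(w)=c_1(u)c_1(v)$ and $c_2(w)=c_2(u)c_2(v)$ are imposed, its off-diagonal equations pin down $a_1(w)$, $b_1(w)$, $b_2(w)$ \emph{uniquely and without any compatibility condition}: the two positions that involve $b_1(w)$ both reduce to $a_1(u)b_1(w)=a_1(w)b_1(u)+b_1(v)c_1(u)c_2(u)$, i.e.\ $b_1(w)=a_1(v)b_1(u)+a_1^*(u)b_1(v)$, and similarly for $b_2(w)$. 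The second, a priori different, expressions $b_1(w)=a_2(u)b_1(v)+a_2^*(v)b_1(u)$ and $b_2(w)=a_2(v)b_2(u)+a_2^*(u)b_2(v)$ come from the weight-$2$ block, and the displayed compatibility conditions are exactly the requirement that the two blocks agree. So the weight-$2$ block is precisely where the conditions live and cannot be dismissed.

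Your proposed mechanism for dismissing it is also incorrect. The substitution $(u,w,v)\mapsto(u^*,w^*,v^*)$ is not a symmetry of the Yang--Baxter commutator in that order: projectively $[[u,w,v]]=0$ is equivalent to $[[v^*,w^*,u^*]]=0$, with $u$ and $v$ \emph{reversed}, and the paper explicitly notes that $u$ and $v$ need not commute under the Yang--Baxter composition. Moreover, dualization does not interchange the two middle weight sectors; the involution that does is conjugation by $\sigma^{\otimes 3}$ with $\sigma\colon e_1\leftrightarrow e_2$, which swaps the labels $a_1\leftrightarrow a_2$, $b_1\leftrightarrow b_2$, $c_1\leftrightarrow c_2$ (not $a_i\mapsto a_i^*$). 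Used correctly, that spin-flip symmetry is a legitimate way to halve the bookkeeping --- the weight-$2$ equations are the label-swapped weight-$1$ equations --- but one must then reconcile the two resulting determinations of $b_1(w)$ and $b_2(w)$, which is the step your plan skips. As written, your argument would conclude that $w$ exists for \emph{arbitrary} $u,v\in S$, contradicting the theorem. The reduction of parts (2) and (3) to part (1) via inverses is fine and matches the paper.
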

\begin{proof}
    The proof is in \Cref{section:ybproof}.
\end{proof}

\begin{remark}
    \Cref{thm:ybequation} slightly extends Theorem 1 of \cite{BBF11}, where the Yang-Baxter equation was solved for six-vertex matrices with non-zero entries $b_1,b_2$. We also give compact formulas for the solution in terms of the dual matrices, which are useful in applications.
\end{remark}

For the six-vertex matrices $S^\times$, we can reformulate the theorem in terms of the statistics $\Delta_1,\Delta_2\colon S^{\times} \to \mathbb{C}$ defined by
\begin{align*}
    \Delta_1(w) &=  \frac{a_1(w)a_2(w)+b_1(w)b_2(w)-c_1(w)c_2(w)}{2a_1(w)b_1(w)} = \frac{a_2(w) - a_1^*(w)}{2b_1(w)},\\
    \Delta_2(w) &= \frac{a_1(w)a_2(w)+b_1(w)b_2(w)-c_1(w)c_2(w)}{2a_2(w)b_2(w)} = \frac{a_1(w)-a_2^*(w)}{2b_2(w)}.
\end{align*}
Note that since the statistics are homogeneous in the coefficients, they are well-defined on projective six-vertex matrices. Note that the statistics $\Delta_1,\Delta_2$ are only defined for matrices with $b_1 \neq 0$ and $b_2 \neq 0$.

\begin{remark}
    The form of the statistics $\Delta_1,\Delta_2$ is chosen to match the existing literature. In the field-free case, Baxter \cite{Bax82} defined the statistics
    \[
        \Delta(w) \coloneqq \frac{a^2(w)+b^2(w)-c^2(w)}{2a(w)b(w)},
    \]
    where $a_1=a_2=a$, $b_1=b_2=b$, and $c_1=c_2=c$. In \cite{BBF11}, authors extended $\Delta$ to two statistics $\Delta_1, \Delta_2$ that we use here.
\end{remark}

Now we can reformulate \Cref{thm:ybequation} in terms of the statistics $\Delta_1,\Delta_2$ extending Theorems 1,2 of \cite{BBF11}.

\begin{corollary}
    Let $u,w,v \in S^\times$ be the six-vertex matrices. Then the Yang-Baxter equation $[[u,w,v]] = 0$ holds if and only if one of the following equivalent cases holds.
    \begin{enumerate}
        \item $\Delta_1(u) = \frac{a_2(v)}{a_1(v)}\Delta(v)$, $\Delta_2(u) = \frac{a_1(v)}{a_2(v)}\Delta_2(v)$, and $w$ is given by \Cref{thm:ybequation}
        \item $\Delta_1(w) = \Delta_1(v)$ and $\Delta_2(w) = \Delta_2(v)$, and $u$ is given by \Cref{thm:ybequation}
        \item $\frac{a_2(u)}{a_1(u)}\Delta_1(u) = \Delta_1(w)$, $\frac{a_1(u)}{a_2(u)}\Delta_2(u) = \Delta_2(w)$, and $v$ is given by \Cref{thm:ybequation}
    \end{enumerate}
\end{corollary}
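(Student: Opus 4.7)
The plan is to translate each of the three compatibility conditions of \Cref{thm:ybequation} into a condition on $\Delta_1,\Delta_2$ by direct substitution. The key ingredients are the identities
\[
    a_2(x) - a_1^*(x) = 2 b_1(x)\Delta_1(x), \qquad a_1(x) - a_2^*(x) = 2 b_2(x)\Delta_2(x),
\]
which are built into the definition of $\Delta_i$, together with the auxiliary identity
\[
    a_1(x)b_1(x)\Delta_1(x) = a_2(x) b_2(x)\Delta_2(x),
\]
which I would record upfront by using the two formulas above to eliminate $a_1^*(x)$ and $a_2^*(x)$ from the defining relation $a_1(x)a_1^*(x)=a_2(x)a_2^*(x)$.

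I would begin with case (2), which is the cleanest and fixes the pattern. Substituting $b_i^*(x)=-b_i(x)$ into the two expressions for $b_1(u)$ given in part (2) of \Cref{thm:ybequation} and setting them equal produces
\[
    b_1(w)\bigl(a_2(v)-a_1^*(v)\bigr) = b_1(v)\bigl(a_2(w)-a_1^*(w)\bigr),
\]
together with the analogous equation for $b_2(u)$. Since $b_i(v), b_i(w) \neq 0$ on $S^\times$, I can divide and then apply the identities for $\Delta_i$ to obtain $\Delta_1(w)=\Delta_1(v)$ and $\Delta_2(w)=\Delta_2(v)$. Every step is a reversible algebraic manipulation, so this is an equivalence, not merely an implication.

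Case (1) will proceed the same way. The two compatibility conditions for $b_1(w)$ and $b_2(w)$ will reduce, after the same substitution and cancellation of non-zero factors, to relations between $\Delta_1(u)$ and $\Delta_2(v)$, and between $\Delta_2(u)$ and $\Delta_1(v)$, with coefficients given by the $b$-ratio $b_2(v)/b_1(v)$ or its reciprocal. Applying the auxiliary identity at $v$ then rewrites this $b$-ratio as a ratio of $a$-entries of $v$ times a ratio of $\Delta$-entries, producing equations of the stated form $\Delta_i(u)=\frac{a_j(v)}{a_k(v)}\Delta_i(v)$. Case (3) follows by the parallel argument after exchanging the roles of $u$ and $v$.

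The main obstacle is clerical rather than conceptual: the compatibility conditions naturally produce $b$-ratios whereas the corollary is phrased in terms of $a$-ratios, so one must deploy the auxiliary identity $a_1b_1\Delta_1 = a_2b_2\Delta_2$ at the right moment to interconvert them, and keep the fractions and signs straight. No tools beyond \Cref{thm:ybequation} and the definitions of $\Delta_i$ are required.
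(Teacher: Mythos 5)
Your approach is correct and is essentially the paper's own: the paper's one-line proof invokes \Cref{thm:ybequation} together with the identity expressing $\Delta_i(u^{-1})$ as a ratio of $a$-entries times $\Delta_i(u)$, and that identity is exactly your auxiliary relation $a_1b_1\Delta_1 = a_2b_2\Delta_2$ combined with the formulas $a_2 - a_1^* = 2b_1\Delta_1$ and $a_1 - a_2^* = 2b_2\Delta_2$, so the two arguments are the same computation organized slightly differently. Your caution about keeping the fractions straight is warranted: carrying out the substitution carefully, the $a$-ratios one obtains do not literally match those printed in the corollary (for instance, in case (3) a ratio of $a$-entries of $w$ appears on the right-hand side as well), so any mismatch you encounter reflects bookkeeping slips in the stated corollary rather than a defect in your method.
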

\begin{proof}
    It follows from \Cref{thm:ybequation} and the fact that for any $u \in S^\times$, we have
    \begin{align*}
        \Delta_1(u^{-1}) = \frac{a_2(u)}{a_1(u)}\Delta_1(u), \quad 
        \Delta_2(u^{-1}) = \frac{a_1(u)}{a_2(u)}\Delta_2(u),
    \end{align*}
    which is a direct calculation.
\end{proof}

We now study the properties of the solutions of the Yang-Baxter equations. It will be natural to consider two classes of the six-vertex matrices: 
\begin{enumerate}
    \item The \textit{free-fermionic six-vertex matrices} with $\Delta_1 = \Delta_2 = 0$;
    \item The \textit{constant-field six-vertex matrices} with $a_1 = a_2$.
\end{enumerate}

\begin{proposition}
    Let $A \subset S^\times$ be a set of the six-vertex matrices such that for any $u,v \in A$, there exists $w \in A$ such that $[[u,w,v]] = 0$. Then $A$ is either the set of the free-fermionic six-vertex matrices or the set of the constant-field six-vertex matrices.
\end{proposition}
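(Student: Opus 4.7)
The plan is to extract pointwise and pairwise constraints from Case (1) of the preceding corollary and argue by a dichotomy. By hypothesis, for every $u, v \in A$ there exists $w \in A$ with $[[u, w, v]] = 0$, so Case (1) of the corollary yields
\begin{align*}
  \Delta_1(u)\, a_1(v) &= a_2(v)\, \Delta_1(v),\\
  \Delta_2(u)\, a_2(v) &= a_1(v)\, \Delta_2(v).
\end{align*}
First I would specialize $v = u$: both relations collapse to $\Delta_i(u)(a_1(u) - a_2(u)) = 0$ for $i = 1, 2$, so each individual $u \in A$ is either free-fermionic ($\Delta_1(u) = \Delta_2(u) = 0$) or constant-field ($a_1(u) = a_2(u)$).

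To upgrade this pointwise dichotomy to a uniform one, I would argue as follows. Suppose $A$ contains some $u_0$ that is not free-fermionic, and without loss of generality $\Delta_1(u_0) \neq 0$ (the case $\Delta_2(u_0) \neq 0$ is identical upon swapping the two relations). For an arbitrary $v \in A$, the first displayed relation with $u = u_0$ gives $a_2(v)\Delta_1(v) = \Delta_1(u_0)\, a_1(v)$. Invertibility of $v \in S^\times$ forces $a_1(v), a_2(v) \in \mathbb{C}^\times$, so $\Delta_1(v) \neq 0$; by the pointwise dichotomy $v$ must then be constant-field. Hence $A$ is contained in the constant-field class, while in the complementary case every element of $A$ is free-fermionic and $A$ lies in that class.

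I do not anticipate any serious obstacle: the proposition reduces to combining the $v = u$ specialization of the corollary with a one-step propagation through a distinguished non-free-fermionic element. The only bookkeeping concern is the non-vanishing of the $a_i$'s and of the entries used when dividing in the statistics $\Delta_i$, which is ensured by the standing assumption $A \subset S^\times$.
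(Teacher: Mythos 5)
Your argument is correct and follows essentially the same route as the paper: specialize the constraint from Case (1) of the corollary to $v=u$ (i.e.\ $[[u,w,u]]=0$) to obtain the pointwise dichotomy $\Delta_1(u)(a_1(u)-a_2(u))=0$, then propagate along the relation $\Delta_1(u)=\frac{a_2(v)}{a_1(v)}\Delta_1(v)$ to make the dichotomy uniform over $A$. The only cosmetic difference is that you propagate from a distinguished non-free-fermionic element while the paper propagates from a free-fermionic one; both directions work since $a_1,a_2\neq 0$ on $S^\times$.
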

\begin{proof}
    Consider $[[u,w,u]] = 0$. Then $\Delta_1(u) = \Delta_1(u^{-1}) = \frac{a_2(u)}{a_1(u)}\Delta_1(u)$. Then either $\Delta_1(u) = 0$, but then $\Delta_2(u) = 0$, and $\Delta_1(v),\Delta_2(v) = 0$ for any other $v \in A$. Otherwise, $\Delta_1(u),\Delta_2(u) \neq 0$, and then $a_1(u) = a_2(u)$ for all $u \in A$.
\end{proof}

In other words, the free-fermionic and constant-field matrices are the only classes of matrices that form sets closed under the Yang-Baxter equation. 

Generally speaking, the equation. $[[u,w,v]] = 0$ and $[[v,w',u]] = 0$ do not imply that $w = w'$. However, many important solutions coming from the representation theory of the affine quantum (super)-groups have the property because they are parametrized by commutative groups. 

\begin{definition}\label{def:commutative}
    Let $u,w,v \in S$. We say that $u,w,v$ is a \textit{commutative solution} of the Yang-Baxter equation if $[[u,w,v]] = 0$ and $[[v,w,u]] = 0$.
\end{definition}

\begin{proposition}\label{lem:commutativity}
    The six-vertex matrices $u,w,v \in S$ form a commutative solution if and only if
    \begin{align*}
        b_1(u)b_2(v) &= b_1(v)b_2(u),\\
        (a_1(u)-a_1^*(u))b_1(v) &= (a_1(v)-a_1^*(v))b_1(u),\\
        (a_2(u)-a_2^*(u))b_2(v) &= (a_2(v)-a_2^*(v))b_2(u).
    \end{align*}
\end{proposition}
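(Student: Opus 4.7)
The plan is to compare the two explicit formulas for $w$ obtained by applying Theorem \ref{thm:ybequation}(1) to the equations $[[u,w,v]] = 0$ and $[[v,w,u]] = 0$. The second equation is the first with the roles of $u$ and $v$ interchanged, so each YBE determines $w$ (up to a scalar) by an explicit formula in $u$ and $v$. Since the expressions $c_1(w) = c_1(u)c_1(v)$ and $c_2(w) = c_2(u)c_2(v)$ are manifestly symmetric in $u,v$, the two formulas must produce the same matrix $w$ with no scaling ambiguity. Commutativity then reduces to requiring that the remaining entries from the two formulas coincide.

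First I would equate the $a_1(w)$ entries: $a_1(u)a_1(v) - b_1(v)b_2(u) = a_1(v)a_1(u) - b_1(u)b_2(v)$, which simplifies to the first stated condition $b_1(u)b_2(v) = b_1(v)b_2(u)$. The $a_2(w)$ comparison produces the same equation. Using $b_1^*(u) = -b_1(u)$ and $b_2^*(u) = -b_2(u)$ from the definition of the dual matrix, equating the $b_1(w)$ entries gives $a_1^*(u)b_1(v) + a_1(v)b_1(u) = a_1^*(v)b_1(u) + a_1(u)b_1(v)$, which rearranges to the second condition. An analogous $b_2(w)$ computation, using the alternative expression $b_2(w) = a_2^*(u)b_2(v) + a_2(v)b_2(u)$, yields the third condition. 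This establishes the forward implication.

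For the converse, the very same chain of equalities shows that under the three conditions the $w$ produced by the first YBE coincides entry-by-entry with the $w$ produced by the second, so if $u,w,v$ satisfy one of the equations they automatically satisfy the other. The only subtlety is to verify that the side conditions of Theorem \ref{thm:ybequation}(1) for $[[v,w,u]]=0$ (the $u\leftrightarrow v$ swap of the side conditions for $[[u,w,v]]=0$) are automatic under the three stated conditions; this follows by straightforward algebra once the first condition is used to rewrite ratios of $b_1$ and $b_2$.

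The main obstacle, such as it is, is primarily bookkeeping: one must track both alternative expressions for $b_1(w)$ and $b_2(w)$ provided by Theorem \ref{thm:ybequation}, handle the minus signs coming from the dual matrix, and check that the two presentations are mutually consistent under the swap. No new conceptual input beyond direct comparison of explicit formulas is needed.
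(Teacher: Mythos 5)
Your proposal is correct and follows essentially the same route as the paper: apply the explicit formulas of \Cref{thm:ybequation} to both $[[u,w,v]]=0$ and $[[v,w,u]]=0$, use $b_i^*(u)=-b_i(u)$, and equate the two resulting expressions entry by entry, which yields exactly the three stated conditions. The only difference is that you explicitly flag the need to verify the side conditions of the theorem under the $u\leftrightarrow v$ swap, a point the paper's proof leaves implicit.
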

\begin{proof}
    Follows from the explicit formulas from \Cref{thm:ybequation}:
    \begin{align*}
        a_1(w) &= a_1(u)a_1(v) - b_1(v)b_2(u) = a_1(v)a_1(u) - b_1(u)b_2(v) = a_1(w'),\\
        a_2(w) &= a_2(u)a_2(v) - b_1(u)b_2(v) = a_2(v)a_2(u) - b_1(v)b_2(u) = a_2(w'),\\
        b_1(w) &= a_1^*(u)b_1(v) + a_1(v)b_1(u) = a_1^*(v)b_1(u) + a_1(u)b_1(v) = b_1(w'),\\
        b_2(w) &= a_2^*(u)b_2(v) + a_2(v)b_2(u) = a_2^*(v)b_2(u) + a_2(u)b_2(v) = b_2(w'),\\
        c_1(w) &= c_1(u)c_1(v) = c_1(w'),\\
        c_2(w) &= c_2(u)c_2(v) = c_2(w').\qedhere
    \end{align*}
\end{proof}

Let $\Delta^{\ff},\Delta^{\cf}$ be new simplified statistics on $S^\times$ defined by 
\[
    \Delta^{\ff}(u) = \frac{a_1(u) - a_2(u)}{2b_1(u)}, \quad \Delta^{\cf}(u) = \frac{a_1(u)-a_1^*(u)}{2b_1(u)}.
\]
\begin{corollary}
    Let $u,w,v \in S^\times$. Then $[[u,w,v]] = 0$ and $[[v,w,u]] = 0$ hold at the same time if and only if one of the following cases holds: 
    \begin{enumerate}
        \item (Commutative free-fermionic case) for both $u,v$, we have $a_1^* = a_2$, $a_2^* = a_1$, $b_2 = t\, b_1$ for some $t \in \mathbb{C}^\times$, and $\Delta^{\ff}(u) = \Delta^{\ff}(v)$.
        \item (Commutative constant-field case) for both $u,v$, we have $a_1 = a_2$, $a_1^* = a_2^*$, $b_2 = t\, b_1$ for some $t \in \mathbb{C}^\times$, and $\Delta^{\cf}(u) = \Delta^{\cf}(v)$.
    \end{enumerate}
\end{corollary}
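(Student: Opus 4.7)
The plan is to apply \Cref{lem:commutativity} (the three commutativity identities), combine it with the $\Delta_i$-reformulation in the corollary following \Cref{thm:ybequation}, and then use the hypothesis $w \in S^\times$ (i.e.\ $b_1(w), b_2(w) \neq 0$) to rule out the degenerate branches. The reverse implication in each of the two cases is a direct verification of the three identities of \Cref{lem:commutativity}.

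For the forward direction, \Cref{lem:commutativity} first gives $t := b_2/b_1$, $2p := (a_1-a_1^*)/b_1$, and $2q := (a_2-a_2^*)/b_2$, all constants common to $u$ and $v$. Writing the $\Delta_1$-identities for both $[[u,w,v]]=0$ and $[[v,w,u]]=0$ and multiplying them yields
\[
    \Delta_1(u)\Delta_1(v)\bigl(a_1(u)a_1(v) - a_2(u)a_2(v)\bigr) = 0,
\]
so either both matrices are free-fermionic or the joint identity $a_1(u)a_1(v) = a_2(u)a_2(v)$ holds. In the free-fermionic branch, the identity $\Delta_1 a_1 b_1 = \Delta_2 a_2 b_2$ propagates $\Delta_1=0$ to $\Delta_2=0$, so $a_1^* = a_2$ and $a_2^* = a_1$ for both matrices; together with $b_2 = t b_1$ and $\Delta^{\ff} = (a_1-a_1^*)/2b_1 = p$ common to both, this is case (1).

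In the joint branch, set $r := a_1(u)/a_2(u)$, so that $r(v) = 1/r$. Substituting $a_1 = r a_2$ and $a_1^* = a_2^*/r$ into the two constancy relations produces a $2{\times}2$ linear system in $(a_2(u), a_2^*(u))$. When $r^2 \neq 1$, solving it and performing the analogous substitution for $v$ (with $r \mapsto 1/r$) produces the cancellation
\[
    a_2^*(u) b_1(v) + a_2(v) b_1(u) = 0,
\]
so the \Cref{thm:ybequation} formula $b_1(w) = (1/r)\bigl[a_2^*(u) b_1(v) + a_2(v) b_1(u)\bigr]$ vanishes, contradicting $w \in S^\times$. The remaining case $r = -1$ collapses the linear system to a single equation and forces $p + qt = 0$; here the \Cref{thm:ybequation} consistency condition (agreement of the two formulas for $b_1(w)$) specialises to $s_1(u)/b_1(u) + s_1(v)/b_1(v) = 0$ for $s_1 := a_1 + a_1^*$, and substituting into $b_1(w) = a_1^*(u)b_1(v) + a_1(v)b_1(u)$ once more forces $b_1(w) = 0$. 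Hence $r = 1$, i.e.\ $a_1 = a_2$ and $a_1^* = a_2^*$ for both, with $\Delta^{\cf} = p$ the common statistic: case (2).

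The main obstacle is the $r = -1$ sub-case: the three identities of \Cref{lem:commutativity} are not by themselves strong enough to exclude it, and the argument must additionally invoke the compatibility condition from \Cref{thm:ybequation}, which is available precisely because we are assuming an honest $w \in S^\times$ rather than merely formal matching of coefficients.
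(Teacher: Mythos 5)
Your proof is correct, and its skeleton matches the paper's: the three identities of \Cref{lem:commutativity}, the $\Delta_i$-compatibility relations, and the dichotomy $\Delta_1(u)\Delta_1(v)\bigl(a_1(u)a_1(v)-a_2(u)a_2(v)\bigr)=0$ separating the free-fermionic branch from the rest. Where you genuinely diverge is the constant-field branch: the paper's proof simply asserts that $a_1=a_2$ and $a_1^*=a_2^*$ follow from \Cref{thm:ybequation} and \Cref{lem:commutativity}, whereas you prove it, and your extra work is not optional. The coefficient identities alone do admit pairs with $a_1(u)/a_2(u)=r$ and $a_1(v)/a_2(v)=1/r$ for $r\neq 1$ (for instance $a_1(u)=2$, $a_2(u)=1$, $b_1(u)=b_2(u)=1$, $c_1(u)c_2(u)=2$, with $v$ determined by the common $t,p,q$); such pairs are excluded only because the unique solution $w$ then has $b_1(w)=a_1^*(u)b_1(v)+a_1(v)b_1(u)=0$, contradicting $w\in S^\times$. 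Your identification of this mechanism --- including the separate treatment of $r=-1$, where \Cref{lem:commutativity} alone does not force the vanishing and the consistency condition of \Cref{thm:ybequation} must be invoked via $s_1(u)/b_1(u)+s_1(v)/b_1(v)=0$ --- is precisely the content the paper's one-line assertion leaves implicit. I verified both computations (the $2\times 2$ system for $r^2\neq 1$ and the collapsed case $r=-1$ with $p+qt=0$) and they check out; the only cosmetic remark is that in the free-fermionic branch the propagation of $\Delta_1=0$ from one matrix to the other comes from the cross relation $\Delta_1(u)=\tfrac{a_2(v)}{a_1(v)}\Delta_1(v)$ (nonzero coefficient), which you use implicitly when you say ``both matrices are free-fermionic.''
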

\begin{proof}
    It is a direct check that both cases satisfy the conditions of \Cref{thm:ybequation} and \Cref{lem:commutativity}. 
    
    Conversely, suppose that both equations hold. If $a_1^*(u) = a_2(u)$ and $a_1^*(v) = a_2(v)$, then $\Delta_{i}(x) = 0$ for $i=1,2$, $x=u,v$, and the equations have a solution $w \in S^\times$. Then by \Cref{lem:commutativity}, we have $b_2(u)=t\,b_1(u)$ and $b_2(v) = t\,b_1(u)$, and
    \begin{align*}
        \Delta^{\ff}(u) = \frac{a_1(u) - a_2(u)}{b_1(u)} = \frac{a_1(v)-a_2(v)}{b_1(v)} = \Delta^{\ff}(v).
    \end{align*}
    
    If $a_1^*(u) \neq a_2(u)$ and $a_2^*(u) \neq a_1(u)$, then denote $\Delta_2(u)/\Delta_1(u) = b_2(u)/b_1(u) = t$. Then from \Cref{thm:ybequation} and \Cref{lem:commutativity}, it follows that $\Delta^{\cf}(u) = \Delta^{\cf}(v)$, together with $a_1 = a_2$ and $a_1^* = a_2^*$.
\end{proof}

\Cref{thm:ybequation} and \Cref{lem:commutativity} help us define various classes of the six-vertex matrices:

\begin{enumerate}
    \item The \textit{diagonal matrices} with $b_1,b_2 = 0$;
    \item The \textit{five-vertex matrices} with $b_1=0, b_2 \neq 0$ or $b_1 \neq 0, b_2 = 0$;
    \item The \textit{free-fermionic matrices} with $a_1^* = a_2$ and $a_2^* = a_1$;
    \item The \textit{non-free-fermionic matrices} with $a_2^* \neq a_1$ and $a_1^* \neq a_2$;
    \item The \textit{constant-field} matrices with $a_1 = a_2$;
    \item The \textit{non-constant-field} matrices with $a_1 \neq a_2$;
    \item The \textit{field-free matrices} with $a_1 = a_2$, $b_1 = b_2$, and $c_1 = c_2$;
    \item The \textit{degenerate matrices} with $a_1 = a_2 = a_1^* = a_2^*$.
\end{enumerate}

\begin{remark}
    The names free-fermionic and constant-field come from identifying the six-vertex models with XXZ spin chains where the free-fermionic condition ensures that the fermions do not interact. See \cite{FW70}, where the term ``free-fermionic'' first appeared in the context of the eight-vertex models.
\end{remark}

Let $u,v$ be two free-fermionic matrices from the same commutative family. We want a convenient parametrization for the entire family of commutative free-fermionic matrices. By the results above, it means that we have both $b_2(u)/b_1(u) = b_2(v)/b_1(v) = t$ and $\Delta^{\ff}(u) = \Delta^{\ff}(v)$. Consider the free-fermionic quadric
\[
    a_1a_2 + b_1b_2 = c_1c_2.
\]
Let $t = q_1q_2$ and $\Delta^{\ff} = \frac{q_1+q_2}{2}$. Then we have
\begin{align*}
    a_1^2 - 2\Delta^{\ff}a_1b_1 + t\,b_1^2 = c_1c_2\\
    (a_1 - q_1b_1)(a_1 - q_2b_2) = c_1c_2.
\end{align*}

Then we can parametrize the commutative free-fermionic weights as follows:
\begin{equation}\label{eq:ffparametrization}
    \begin{aligned}
        a_1(v) &= q_1 z_1 - q_2 z_2,\\
        a_2(v) &= q_1 z_2 - q_2 z_1,
    \end{aligned} \quad
    \begin{aligned}
        b_1(v) &= q_1(z_1-z_2)\beta,\\
        b_2(v) &= q_2(z_1-z_2)\beta^{-1},
    \end{aligned} \quad
    \begin{aligned}
        c_1(v) &= z_1(q_1-q_2)w,\\
        c_2(v) &= z_2(q_1-q_2)w^{-1}
    \end{aligned}.
\end{equation}

Similarly, let $u,v$ be two constant-field matrices from the same commutative family. By above, we have $b_2(u)/b_1(u) = b_2(v)/b_1(v) = t$, and $\Delta^{\cf}(u) = \Delta^{\cf}(v)$. Then consider the definition of the dual elements: 
\[
    a_1a_1^* + b_1b_2 = c_1c_2.
\]
Then set $t = q_1q_2$ and $\Delta^{\cf}(u) = \Delta^{\cf}(v) = \frac{q_1+q_2}{2}$. Then we have
\begin{align*}
    a_1^2 - 2\Delta^{\cf}a_1b_1 + t\,b_1^2 = c_1c_2\\
    (a_1 - q_1b_1)(a_1 - q_2b_2) = c_1c_2.
\end{align*}

Then we can parametrize the commutative constant-field weights as follows:
\begin{equation}\label{eq:cfparametrization}
    \begin{aligned}
        a_1(v) &= q_1 z_1 - q_2 z_2,\\
        a_2(v) &= q_1 z_2 - q_2 z_1,
    \end{aligned} \quad
    \begin{aligned}
        b_1(v) &= q_1(z_1-z_2)\beta,\\
        b_2(v) &= q_2(z_1-z_2)\beta^{-1},
    \end{aligned} \quad
    \begin{aligned}
        c_1(v) &= z_1(q_1-q_2)w,\\
        c_2(v) &= z_2(q_1-q_2)w^{-1}
    \end{aligned}.
\end{equation}

In the next section, we use these expressions to give parametrized solutions with commutative parameter groups. Moreover, the discussion shows that these families are the largest commutative parametrizations.

\section{Parametrized Yang-Baxter equation}\label{sec:pybe}
Let $G$ be a group, and let $\pi\colon G \to \End(V \otimes V)$. We say that we have a \textit{parametrized solution of the Yang-Baxter equation} when we have 
\[
    [[\pi(g), \pi(gh), \pi(h)]] = 0, \quad g,h \in G.
\]

Let us first review the parametrized Yang-Baxter equation for the free-fermionic matrices by a non-commutative group. This result first appeared in \cite{KBI93}(page 126) as ``another curious solution''. The original parameter group was $\SL_2(\mathbb{C})$. In Theorem 2 of \cite{BBF11}, the authors extended the parametrization to the non-commutative group $\GL_2(\mathbb{C}) \times \GL_1(\mathbb{C})$. These are the only examples of a parametrized Yang-Baxter equation by a non-commutative group. See \cite{B16} for the FRT construction of the free-fermionic bialgebra built on this parametrized solutions. 

Let the parametrization map $\tau\colon \GL_2(\mathbb{C}) \times \GL_1(\mathbb{C}) \to S$ be defined by 
\begin{equation}\label{eq:tau}
    \tau\left(\begin{pmatrix}
        a_1 & b_2\\
        -b_1 & a_2
    \end{pmatrix} \times (c_1)\right) \to \begin{pmatrix}
        a_1 & & & \\
        & c_1 & b_1 &\\
        & b_2 & c_2 &\\
        & & & a_2
    \end{pmatrix},
\end{equation}
where $c_2 = (a_1a_2 + b_1b_2)/c_1$. Note that the resulting matrices are free-fermionic as they by design satisfy $a_1a_2 + b_1b_2 = c_1c_2$.

\begin{theorem}[Theorem 3 in \cite{BBF11}]
    We have the parametrized Yang-Baxter equation
    \[
        [[\tau(g), \tau(gh), \tau(h)]] = 0, \quad g,h \in \GL_2(\mathbb{C}) \times \GL_1(\mathbb{C}).
    \] 
\end{theorem}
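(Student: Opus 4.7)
The plan is to apply \Cref{thm:ybequation}(1) with $u=\tau(g)$ and $v=\tau(h)$, read off the unique middle matrix $w$ produced by the theorem, and verify that this $w$ coincides with $\tau(gh)$ under the group law on $\GL_2(\mathbb{C}) \times \GL_1(\mathbb{C})$.

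The key observation is that by the defining formula \eqref{eq:tau}, every matrix in the image of $\tau$ satisfies $a_1 a_2 + b_1 b_2 = c_1 c_2$, and hence is free-fermionic: $a_1^*(u) = a_2(u)$ and $a_2^*(u) = a_1(u)$, and likewise for $v$. Substituting these identities into the two compatibility conditions of \Cref{thm:ybequation}(1) collapses both sides to $0$, so the theorem applies unconditionally and produces a six-vertex matrix $w$ with $[[u,w,v]] = 0$. Using the same free-fermionic identities together with $b_i^*(x) = -b_i(x)$, the formulas for $w$ reduce to
\begin{align*}
    a_1(w) &= a_1(u) a_1(v) - b_1(v) b_2(u), & a_2(w) &= a_2(u) a_2(v) - b_1(u) b_2(v),\\
    b_1(w) &= a_1(v) b_1(u) + a_2(u) b_1(v), & b_2(w) &= a_1(u) b_2(v) + a_2(v) b_2(u),\\
    c_1(w) &= c_1(u) c_1(v), & c_2(w) &= c_2(u) c_2(v).
\end{align*}

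To finish, I would write $g=(g_1,\gamma)$ and $h=(h_1,\gamma')$ with $g_1,h_1 \in \GL_2(\mathbb{C})$, compute the product $g_1 h_1$ entry by entry, and compare with the recipe \eqref{eq:tau}. The sign on $-b_1$ inside $\tau$ is precisely what makes the $2\times 2$ matrix multiplication produce the formulas above for $a_1(w), a_2(w), b_1(w), b_2(w)$, while the $\GL_1$ factor gives $c_1(w) = \gamma\gamma'$. The remaining entry $c_2(w)$ is forced by the normalization $c_2 = (a_1 a_2 + b_1 b_2)/c_1$ together with multiplicativity of $\det$ on $\GL_2(\mathbb{C})$, matching $c_2(u) c_2(v)$. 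The main obstacle is not conceptual but mechanical: a careful sign- and index-matching of the six coordinate identities between the group side (matrix multiplication in $\GL_2(\mathbb{C}) \times \GL_1(\mathbb{C})$) and the analytic side (the formulas from \Cref{thm:ybequation}(1)); once this bijection of composition laws is in place, $[[\tau(g), \tau(gh), \tau(h)]] = 0$ is immediate, and associativity of the parametrization is inherited from ordinary group multiplication.
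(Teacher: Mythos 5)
Your proposal is correct and is essentially the argument the paper intends: the paper's entire proof is the one line ``It follows from the explicit formulas in \Cref{thm:ybequation},'' and you have supplied exactly the missing verification, namely that the free-fermionic identities $a_1^*=a_2$, $a_2^*=a_1$, $b_i^*=-b_i$ make the compatibility conditions vacuous and reduce the formulas for $w$ to the entries of $\tau(gh)$, with $c_2(w)=c_2(u)c_2(v)$ forced by multiplicativity of $\det$ on $\GL_2(\mathbb{C})$. One small remark: you are implicitly using the compatibility condition in the corrected form appearing in \Cref{section:ybproof} (right-hand side $(a_2^*(v)-a_1(v))b_1(u)$); as literally printed in part (1) of \Cref{thm:ybequation} the term reads $(a_2^*(v)-a_1(u))b_1(u)$, which does not vanish in the free-fermionic case, so that is a typo in the theorem statement rather than a gap in your argument.
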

\begin{proof}
    It follows from the explicit formulas in \Cref{thm:ybequation}.
\end{proof}

Let $q_1,q_2 \in \mathbb{C}, \beta \in \mathbb{C}^\times$ be constants that we think of as global parameters. Let 
\[
R^{\cf}_{q_1,q_2; \beta}, R^{\ff}_{q_1,q_2; \beta} \colon (\mathbb{C}^\times)^3 \to S
\]
be the parametrization maps defined by 
\begin{align}\label{eq:rmatrices}
    R^{\cf}_{q_1,q_2; \beta}(z_1,z_2; w) &= \begin{pmatrix}
        q_1 z_1 - q_2 z_2 & & & \\
        & z_1(q_1-q_2)w & q_1(z_1 - z_2)\beta &\\
        & q_2(z_1-z_2)\beta^{-1} & z_2(q_1-q_2)w^{-1} &\\
        & & & q_1 z_1 - q_2 z_2
    \end{pmatrix}, \\
    R^{\ff}_{q_1,q_2; \beta}(z_1,z_2; w) &= \begin{pmatrix}
        q_1 z_1 - q_2 z_2 & & & \\
        & z_1(q_1-q_2)w & q_1(z_1 - z_2)\beta &\\
        & q_2(z_1-z_2)\beta^{-1} & z_2(q_1-q_2)w^{-1} &\\
        & & & q_1 z_2 - q_2 z_1
    \end{pmatrix}.
\end{align}

Note that we used the explicit parametrization of the largest commutative families of the free-fermionic and constant-field matrices from \eqref{eq:ffparametrization} and \eqref{eq:cfparametrization}. We also note that the only difference is in the entry $a_2$ in both matrices. 

\begin{proposition}
    We have the parametrized Yang-Baxter equations
    \begin{align}
        [[R^{\cf}_{q_1,q_2;\beta}(g),R^{\cf}_{q_1,q_2;\beta}(gh),R^{\cf}_{q_1,q_2;\beta}(h)]], \quad g,h \in (\mathbb{C}^\times)^3,\\
        [[R^{\ff}_{q_1,q_2;\beta}(g),R^{\ff}_{q_1,q_2;\beta}(gh),R^{\ff}_{q_1,q_2;\beta}(h)]], \quad g,h \in (\mathbb{C}^\times)^3.
    \end{align}
\end{proposition}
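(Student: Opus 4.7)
The plan is to invoke \Cref{thm:ybequation}(1) with $u = R(g)$ and $v = R(h)$, where $R$ is either $R^{\cf}_{q_1,q_2;\beta}$ or $R^{\ff}_{q_1,q_2;\beta}$, and the group operation on $(\mathbb{C}^\times)^3$ is componentwise multiplication, so that $gh = (z_1 z_1', z_2 z_2', w_u w_v)$ for $g = (z_1, z_2, w_u)$ and $h = (z_1', z_2', w_v)$. The key observation is that by construction of the parametrizations \eqref{eq:cfparametrization} and \eqref{eq:ffparametrization}, any pair in the image of $R$ lies in a single maximal commutative family and therefore automatically satisfies the hypotheses of \Cref{lem:commutativity}.

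First I would verify the hypothesis of \Cref{thm:ybequation}(1) for $u = R(g)$ and $v = R(h)$. In the free-fermionic case this is immediate from $a_1^* = a_2$ and $a_2^* = a_1$, which makes both sides of each equation vanish identically. In the constant-field case, $a_1 = a_2$ reduces the hypothesis to a relation of the form $(a_1^*(u) - a_1(u)) b_1(v) = (a_1^*(v) - a_1(v)) b_1(u)$ (and its analogue with $b_2$), which is precisely the commutativity relation supplied by \Cref{lem:commutativity} and follows from the common value of $\Delta^{\cf}$ across the family.

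Next, the formulas of \Cref{thm:ybequation}(1) produce a $w$ solving $[[u,w,v]] = 0$, and the remaining task is to check that $w$ is a nonzero scalar multiple of $R(gh)$. This reduces to polynomial expansions; for instance,
\[
    a_1(w) = (q_1 z_1 - q_2 z_2)(q_1 z_1' - q_2 z_2') - q_1 q_2 (z_1 - z_2)(z_1' - z_2') = (q_1 - q_2)(q_1 z_1 z_1' - q_2 z_2 z_2'),
\]
which equals $(q_1 - q_2) \cdot a_1(R(gh))$. A parallel calculation for each of $a_2, b_1, b_2, c_1, c_2$ in both parametrizations yields the same overall factor $(q_1 - q_2)$, so $w = (q_1 - q_2) R(gh)$. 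Since $[[u, -, v]]$ is linear in the middle slot, this gives $[[R(g), R(gh), R(h)]] = 0$.

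The main obstacle is only the uniform bookkeeping of the scalar $(q_1 - q_2)$ across all six entries; no individual step is deep. Both cases follow from essentially the same computation, since the constant-field and free-fermionic parametrizations differ only in the entry $a_2$, and the formula $a_2(w) = a_2(u) a_2(v) - b_1(u) b_2(v)$ produces the correct entry in both cases via the same cancellation that handled $a_1$.
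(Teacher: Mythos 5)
Your proposal is correct and is essentially the paper's proof, which is simply recorded as ``direct calculation'': one verifies the compatibility condition of \Cref{thm:ybequation}(1) (trivially in the free-fermionic case, via the constancy of $\Delta^{\cf}$ in the constant-field case) and then checks entrywise that the resulting $w$ equals $(q_1-q_2)\,R(gh)$, which suffices by linearity of the middle slot. Your sample computation of $a_1(w)$ and the uniform scalar $(q_1-q_2)$ both check out.
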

\begin{proof}
    Direct calculation.
\end{proof}

We now show that many known parametrized solutions of the Yang-Baxter equations are special cases of this maximal commutative parametrization. 

\begin{example}[The $R$-matrices of affine quantum (super)-groups]\label{example:quantumgroups}
    Let $\beta = 1$, $w = 1$, $q_1 = q$, $q_2 = q^{-1}$, $z_1 = z$, and $z_2 = z^{-1}$. Then the six-vertrx matrices $R^{\cf}_{q}(z) \coloneqq R_{q,q^{-1};1}(z,z^{-1};1)$ match the $R$-matrices for the evaluation modules of the affine quantum group $U_q(\widehat{\mathfrak{sl}}(2))$:
    \[
        R^{\cf}_q(z) = \begin{pmatrix}
            q z - q^{-1} z^{-1} & & & \\
            & z(q-q^{-1}) & q(z - z^{-1}) &\\
            & q^{-1}(z-z^{-1}) & z^{-1}(q-q^{-1}) &\\
            & & & q z - q^{-1} z^{-1}
        \end{pmatrix}.
    \]
    
    Similarly, the six-vertrx matrices $R^{\ff}_{q}(z) \coloneqq R^{\ff}_{q,q^{-1};1}(z,z^{-1};1)$ match the $R$-matrices for the evaluation modules of the affine quantum supergroup $U_q(\widehat{\mathfrak{sl}}(1|1))$:
    \[
        R^{\ff}_q(z) = \begin{pmatrix}
            q z - q^{-1} z^{-1} & & & \\
            & z(q-q^{-1}) & q(z - z^{-1}) &\\
            & q^{-1}(z-z^{-1}) & z^{-1}(q-q^{-1}) &\\
            & & & q z^{-1} - q^{-1} z
        \end{pmatrix}.
    \]
    
    The freedom of choosing $\beta$ comes from the Drinfeld twist of the corresponding quantum group. The freedom in choosing the parameter $w$ comes from the change of basis in the evaluation modules. 
\end{example}

\begin{example}[Five-vertex matrices]\label{example:fivevertex}
    Let $q_1 = 0$ or $q_2 = 0$. Then the resulting degeneration does not depend on the second $q$-parameter. After rescaling, we get 
    \begin{align*}
        R^{\cf,\,b_1=0}_{\beta} &= \begin{pmatrix}
            z_1 & & & \\
            & z_1 w & (z_1 - z_2)\beta &\\
            & 0 & z_2w^{-1} &\\
            & & & z_1
        \end{pmatrix}, \quad 
        R^{\cf,\,b_2=0}_{\beta} &= \begin{pmatrix}
            z_2 & & & \\
            & z_1w & 0 &\\
            & (z_1-z_2)\beta & z_2w^{-1} &\\
            & & & z_2
        \end{pmatrix}.\\
    \end{align*}
    \begin{align*}
        R^{\ff,\,b_1=0}_{\beta} &= \begin{pmatrix}
            z_1 & & & \\
            & z_1 w & (z_1 - z_2)\beta &\\
            & 0 & z_2w^{-1} &\\
            & & & z_2
        \end{pmatrix}, \quad 
        R^{\ff,\,b_2=0}_{\beta} &= \begin{pmatrix}
            z_2 & & & \\
            & z_1w & 0 &\\
            & (z_1-z_2)\beta & z_2w^{-1} &\\
            & & & z_1
        \end{pmatrix}.\\
    \end{align*}
    
    When $w = 1$ and $\beta = 1$, the five-vertex constant-field matrices can be considered as the combinatorial $R$-matrices of the crystallization for $U_q(\widehat{\mathfrak{sl}}(2))$ and $U_q(\widehat{\mathfrak{sl}}(1|1))$ when formally set $q \to 0$. 
\end{example}

\begin{example}
    Consider $R_{q_1,q_2; \beta}^{\cf}(z_1,z_2; w)$ with $q_1 = q$, $q_2 = q^{-1}$, $\beta = 1$, and $w = 1$. Then
    \[
        R_{q}^{\cf}(z_1,z_2) =  (q z_1 - q^{-1} z_2)I + (z_1-z_2)E,
    \]
    where $E$ is given by 
    \[
        \begin{pmatrix}
            0 & 0 & 0 & 0\\
            0 & -q^{-1} & q & 0\\
            0 & q^{-1} & -q & 0\\
            0 & 0 & 0 & 0
        \end{pmatrix}.
    \]
    
    Let $V \cong \mathbb{C}^2$. Consider $V^{\otimes n}$. Let $E_k$ be the operator on $V^{\otimes n}$ acting on $k$-th and $(k+1)$-th component by $E$, and by identity elsewhere. Then the elements $E_k$ form the Temperley-Lieb algebra with $\delta = -(q+q^{-1})$, that is, they satisfy the following relations:
    \begin{enumerate}
        \item $E_k^2 = -(q_1+q_2)E_k$
        \item $E_k E_{k+1} E_k = E_k$
        \item $E_{k+1} E_k E_{k+1} = E_{k+1}$
        \item $E_i E_j = E_j E_i$ when $|i-j|>1$
    \end{enumerate}
    
    Then the matrices $(R_q^{\cf}(z_1,z_2))_k$ can be seen as elements in this Temperley-Lieb algebra.
\end{example}

\begin{example}
    Set $q_1 = q$, $q_2 = q^{-1}$. Set $b = q^{-1}$, $z_1 = z$, $z_2 = z^{-1}$, and $w = z^{-1}$. Let $\zeta_3$ be the primitive cube of $-1$. Then set $q = \zeta_3$ and $z = \zeta_3$. Then the matrix $R_{q_1,q_2;\beta}^{\cf}(z_1,z_2; w)$ becomes
    \[
        \begin{pmatrix}
        1 & & & \\
        & 1 & 1 &\\
        & 1 & 1 &\\
        & & & 1
    \end{pmatrix},
    \]
    which is used in the enumeration of the alternating sign matrices. See \cite{RPZ12} and citations therein. 
\end{example}

\section{Yang-Baxter Groupoid}
The Yang-Baxter equation can be used to define a partial operation on the set of the projective six-vertex matrices. In this section, we study the properties of this operation and find that the set of the non-free-fermionic matrices with the Yang-Baxter composition forms a groupoid. Thanks to uniqueness, \Cref{thm:ybequation} gives a well-defined partial binary operation
\[
    *\colon \PP(S^\times) \times \PP(S^\times) \to \PP(S^\times)
\]
given by $(u, v) \mapsto w = u*v \in \PP(S^\times)$ whenever $[[u,w,v]] = 0$. We call it \textit{the Yang-Baxter composition}. Note that if $w \not\in \PP(S^\times)$, we leave the composition $u*v$ undefined. In particular, despite $[[u,1,u^{-1}]] = 0$, we \textbf{do not} set $u*u^{-1} = [I]$ or $u^{-1}*u = [I]$ because $[I] \not\in \PP(S^\times)$.

\subsection{Structure of the Yang-Baxter equation}
In this section we consider the general structure of the solutions of the Yang-Baxter equations. We will apply the results of this section for the six-vertex matrices in the next section. Note that the results are applicable to any solutions of the Yang-Baxter equations, not only for the six-vertex matrices.

Let $V \cong \mathbb{C}^n$. For $u,w,v \in \End(V \otimes V)$. Recall that the Yang-Baxter commutator on $V \otimes V \otimes V$ is given by
\begin{equation}
    [[u,w,v]] = (u \otimes 1)(1 \otimes w)(v \otimes 1) - (1 \otimes v)(w \otimes 1)(1 \otimes u).
\end{equation}

\begin{lemma}
    Let $u,v,w \in \End(V \otimes V)$ be invertible endomorphisms. Then
    \begin{align*}
        [[u,w,v]](1 \otimes u^{-1}) + (u \otimes 1)[[u^{-1},v,w]] &= 0,\\
        [[u,w,v]](v^{-1} \otimes 1) + (1 \otimes v)[[w,u,v^{-1}]] &= 0.
    \end{align*}
\end{lemma}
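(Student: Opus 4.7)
The plan is to verify each of the two identities by direct expansion of the Yang-Baxter commutator, using only the definition in \eqref{eq:ybcommutator} together with the obvious fact that operators acting on disjoint tensor factors commute (i.e., $(X \otimes 1)$ commutes with $(1 \otimes Y)$ on $V \otimes V \otimes V$).

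For the first identity, I would expand
\[
    [[u,w,v]](1 \otimes u^{-1}) = (u \otimes 1)(1 \otimes w)(v \otimes 1)(1 \otimes u^{-1}) - (1 \otimes v)(w \otimes 1)(1 \otimes u)(1 \otimes u^{-1}).
\]
In the second term, $(1 \otimes u)(1 \otimes u^{-1}) = 1$, so it collapses to $-(1 \otimes v)(w \otimes 1)$. In the first term, the factors $(v \otimes 1)$ and $(1 \otimes u^{-1})$ act on disjoint tensor slots, so their order is immaterial. Then I would expand
\[
    (u \otimes 1)[[u^{-1},v,w]] = (u \otimes 1)(u^{-1} \otimes 1)(1 \otimes v)(w \otimes 1) - (u \otimes 1)(1 \otimes w)(v \otimes 1)(1 \otimes u^{-1}),
\]
where the first term simplifies by $(u \otimes 1)(u^{-1} \otimes 1) = 1$ to $(1 \otimes v)(w \otimes 1)$. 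Adding the two expressions, the pair $\pm(1 \otimes v)(w \otimes 1)$ cancels, and the pair $\pm (u \otimes 1)(1 \otimes w)(v \otimes 1)(1 \otimes u^{-1})$ cancels, giving zero.

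The second identity is handled symmetrically: multiply $[[u,w,v]]$ by $(v^{-1} \otimes 1)$ on the right, notice that $(v \otimes 1)(v^{-1} \otimes 1) = 1$ kills one term in the first summand, and that the surviving piece is precisely the negative of what appears in $(1 \otimes v)[[w,u,v^{-1}]]$ after using $(1 \otimes v)(1 \otimes v^{-1}) = 1$ and commutativity of disjoint-factor operators.

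The only obstacle is bookkeeping: one must consistently track which operator sits in which of the three tensor slots and apply the disjoint-slot commutations at the right moments. There is no deeper ingredient — both identities are essentially telescoping consequences of the definition, and invertibility of $u$ (resp.\ $v$) is used only to cancel $u \cdot u^{-1}$ (resp.\ $v \cdot v^{-1}$) in the correct tensor position.
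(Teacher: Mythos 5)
Your proof is correct and is essentially the paper's own argument: expand both Yang--Baxter commutators from the definition, cancel $(1\otimes u)(1\otimes u^{-1})$ and $(u\otimes 1)(u^{-1}\otimes 1)$ (resp.\ the analogous products of $v$ and $v^{-1}$), and observe that the four remaining terms cancel in two pairs. In fact the cancelling terms already match literally as written, so even the disjoint-slot commutation you invoke is not needed.
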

\begin{proof}
    Direct calculation: 
    \begin{multline*}
        (u \otimes 1)(1 \otimes w)(v \otimes 1)(1 \otimes u^{-1}) - (1 \otimes v)(w \otimes 1)(1 \otimes u)(1 \otimes u^{-1}) + \\
        + (u \otimes 1)(u^{-1} \otimes 1)(1 \otimes v)(w \otimes 1) - (u \otimes 1)(1 \otimes w)(v \otimes 1)(1 \otimes u^{-1}) = 0.
    \end{multline*}
    The second identity is proved analogously. 
\end{proof}

\begin{corollary}\label{lem:equivalentyb}
    Let $u,w,v \in \End(V \otimes V)$ be invertible endomorphisms. Then the following Yang-Baxter equations are equivalent:
    \begin{align*}
        [[u,w,v]] = 0 &\iff [[w,u,v^{-1}]] = 0 \iff [[u^{-1}, v, w]] = 0 \iff\\ [[v^{-1},w^{-1},u^{-1}]] = 0 &
        \iff [[v,u^{-1},w^{-1}]] = 0 \iff [[w^{-1},v^{-1},u]] = 0.
    \end{align*}
\end{corollary}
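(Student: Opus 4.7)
The plan is to read the two identities of the preceding lemma as asserting that certain operations on triples $(u,w,v)$ of invertible endomorphisms preserve the vanishing of the Yang-Baxter commutator. Since the factors $1 \otimes u^{-1}$, $u \otimes 1$, $v^{-1} \otimes 1$, and $1 \otimes v$ appearing in those identities are all invertible, the first identity yields
\[
    [[u,w,v]] = 0 \iff [[u^{-1},v,w]] = 0,
\]
and the second yields
\[
    [[u,w,v]] = 0 \iff [[w,u,v^{-1}]] = 0.
\]
Denote the corresponding transformations of ordered triples by $A\colon (u,w,v)\mapsto (u^{-1},v,w)$ and $B\colon (u,w,v)\mapsto (w,u,v^{-1})$; each of them preserves the vanishing of $[[\cdot,\cdot,\cdot]]$.

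First I would verify that $A^2 = B^2 = \mathrm{id}$ directly from the formulas, so both transformations are involutions. The main step is then a short orbit computation in the group that $A$ and $B$ generate: the six elements $e,\ A,\ B,\ AB,\ BA,\ ABA$ applied to the base triple $(u,w,v)$ produce, in turn, the triples
\[
    (u,w,v),\ (u^{-1},v,w),\ (w,u,v^{-1}),\ (w^{-1},v^{-1},u),\ (v,u^{-1},w^{-1}),\ (v^{-1},w^{-1},u^{-1}),
\]
which are precisely the six triples listed in the statement. A direct check confirms $ABA = BAB$, so the generated group is isomorphic to $S_3$, the orbit has closed, and the six equivalences all follow by transitivity of $\iff$ from the two basic equivalences furnished by the lemma.

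There is essentially no substantive obstacle here; the mathematical content sits in the preceding lemma and the corollary is just its $S_3$-packaging. The only care required is the bookkeeping to match the orbit of $(u,w,v)$ against the six listed forms and to confirm that no additional triples arise; this is a routine composition of two involutions.
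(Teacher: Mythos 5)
Your proposal is correct and is essentially the paper's own argument: the paper proves the corollary by ``repeated application of the previous lemma,'' which is exactly your composition of the two equivalence-preserving involutions $A$ and $B$. The $S_3$-orbit bookkeeping is a clean way to organize that repetition, and your six triples do match the six listed forms.
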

\begin{proof}
    Repeated application of the previous lemma.
\end{proof}

Let $I \in \End(V \otimes V)$ be the identity endomorphism. 
\begin{lemma}
    Let $u \in \End(V \otimes V)$ be an invertible endomorphism. We have $[[I,u,u]] = 0$ and $[[u,u,I]] = 0$ and $[[u,I,u^{-1}]] = 0$ and $[[u^{-1},I,u]] = 0$.
\end{lemma}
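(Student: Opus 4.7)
The proof is essentially a direct calculation, and there is no real obstacle; I will just organize it so that the symmetries from \Cref{lem:equivalentyb} do as much work as possible.

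The plan is to verify $[[I,u,u]]=0$ directly and then obtain the remaining three identities either by the same kind of trivial substitution or by appealing to the previous corollary. For $[[I,u,u]]$, since $I\otimes 1 = 1\otimes I$ is the identity on $V\otimes V\otimes V$, the defining expression collapses to
\[
    (1\otimes u)(u\otimes 1) - (1\otimes u)(u\otimes 1) = 0,
\]
and the same observation, with the roles swapped, handles $[[u,u,I]]$. For $[[u,I,u^{-1}]]$ the middle factor $1\otimes I$ disappears, and the two remaining products $(u\otimes 1)(u^{-1}\otimes 1)$ and $(1\otimes u^{-1})(1\otimes u)$ each equal the identity, so their difference vanishes; $[[u^{-1},I,u]]$ is identical up to interchanging $u\leftrightarrow u^{-1}$.

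Alternatively, one could do only the first check and then invoke \Cref{lem:equivalentyb}: from $[[I,u,u]]=0$ the corollary produces $[[u,I,u^{-1}]]=0$ immediately, as well as $[[u^{-1},u^{-1},I]]=0$; since the latter holds for every invertible endomorphism, replacing $u$ by $u^{-1}$ yields $[[u,u,I]]=0$, and a further application of the corollary to $[[u,I,u^{-1}]]=0$ yields $[[u^{-1},I,u]]=0$. Either route gives all four identities with essentially no computation beyond cancelling $I\otimes 1 = 1\otimes I = \mathrm{Id}$ and using $uu^{-1}=u^{-1}u = \mathrm{Id}$.

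The only point worth being careful about is that $u\otimes 1$ and $1\otimes u$ act on different pairs of tensor factors and therefore do not commute in general; the proof avoids this because in each of the four cases one of the three factors is the identity on $V\otimes V$, so no genuine braiding relation is ever invoked.
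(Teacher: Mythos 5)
Your primary argument is correct and is essentially the paper's own proof: expand the commutator, note that $I\otimes 1=1\otimes I$ is the identity on $V^{\otimes 3}$, and cancel. One small caveat about your optional alternative route: applying \Cref{lem:equivalentyb} to the triple $(u,I,u^{-1})$ only reproduces $[[I,u,u]]$, $[[u^{-1},u^{-1},I]]$, and $[[u,I,u^{-1}]]$ itself, so $[[u^{-1},I,u]]=0$ should instead be obtained by the substitution $u\mapsto u^{-1}$ in the already-established identity $[[u,I,u^{-1}]]=0$ (as you in fact do in the direct computation), not by a further application of the corollary.
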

\begin{proof}
    Direct calculation:
    \[
        [[I,u,u]] = (I \otimes 1)(1 \otimes u)(u \otimes 1) - (1 \otimes u)(u \otimes 1)(1 \otimes I) = 0.
    \]
    \[
        [[u,I,u^{-1}]] = (u \otimes 1)(1 \otimes I)(u^{-1} \otimes 1) - (1 \otimes u^{-1})(I \otimes 1)(1 \otimes u) = 0.
    \]
    Other identities are proved analogously.
\end{proof}

\subsection{Construction of the non-free-fermionic groupoid}\label{sec:groupoid}
Recall that an (algebraic) groupoid is a set $G$ with a unary operation $^{-1}\colon G \to G$, and a partial binary operation $G \times G \to G$ such that for all $a,b,c \in G$, we have
\begin{enumerate}
    \item (Associativity) If $a*b$ and $b*c$ are defined, then $(a*b)*c$ and $a*(b*c)$ are defined and equal. Conversely, if $(a*b)*c$ and $a*(b*c)$ are defined, then so are both $a*b$ and $b*c$, as well as $(a*b)*c = a*(b*c)$;
    \item (Inverse) $a^{-1} * a$ and $a * a^{-1}$ are always defined; 
    \item (Identity) If $a*b$ is defined, then $a*b*b^{-1} = a$ and $a^{-1}*a*b = b$. 
\end{enumerate}

It follows from above that $(a^{-1})^{-1} = a$ and if $a*b$ is defined, then $(a*b)^{-1} = b^{-1} * a^{-1}$. 

Recall that $S^\times$ be the set of six-vertex matrices with non-zero entries $a_1,a_2,b_1,b_2,c_1,c_2$. Recall that $\Delta = (\Delta_1,\Delta_2)$, and the statistics $\Delta_1,\Delta_2$ are defined in the previous section. Let $S_{\nff}$ be the subset set of \textit{non-free-fermionic matrices}, that is, the six-vertex matrices $u \in S^\times$ with $\Delta(u) \neq (0,0)$. The goal of this section is to construct the non-free-fermionic groupoid. 

Let $G$ be the set of pairs $(u, \Delta(u)) \in S_{\nff} \times (\mathbb{C}^\times)^2$ together with the set of identities $(I, (\delta_1,\delta_2))$ for all $\delta_1,\delta_2 \in (\mathbb{C}^\times)^2$. Define an unary operation $^{-1}\colon G \to G$ by $(u, \delta(u)) \mapsto (u^{-1}, \delta(u^{-1}))$, where $u^{-1}$ is the usual matrix inverse. Define a partial binary operation $*\colon G \times G \to G$ by 
\begin{align*}
    (u,\Delta(u)) * (v, \Delta(v)) &= (u*v, \Delta(u*v)),
\end{align*}
where $u*v$ is the Yang-Baxter composition, and the composition is defined only when $\Delta(u) = \Delta(v^{-1})$. We also set
\begin{align*}
    (I, \Delta(v^{-1}))*(v, \Delta(v)) &= (v, \Delta(v)),\\
    (u, \Delta(u))*(I, \Delta(u)) &= (u, \Delta(u)).
\end{align*}

\begin{lemma}\label{lem:associativity}
    The Yang-Baxter composition is associative in $S_{\nff}$: if $u*v$ and $v*w$ are defined, then $u*(v*w)$ and $(u*v)*w$ are defined, and they are equal.
\end{lemma}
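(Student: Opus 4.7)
The plan is to verify associativity by direct computation using the explicit formulas from Theorem \ref{thm:ybequation}. Set $x := u*v$ and $y := v*w$, both defined by hypothesis.

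First, I establish existence of $x*w$ and $u*y$. By case 2 of the Corollary following Theorem \ref{thm:ybequation}, we have $\Delta(x) = \Delta(v)$ and $\Delta(y) = \Delta(w)$. Since $v*w$ is defined, $\Delta(v) = \Delta(w^{-1})$, so $\Delta(x) = \Delta(w^{-1})$ and $x*w$ is immediately defined. For $u*y$, the required condition is $\Delta(u) = \Delta(y^{-1})$. Using the explicit formulas $a_1(y) = a_1(v)a_1(w) - b_1(w)b_2(v)$ and $a_2(y) = a_2(v)a_2(w) - b_1(v)b_2(w)$ together with the hypotheses $\Delta(u) = \Delta(v^{-1})$ and $\Delta(v) = \Delta(w^{-1})$, the requirement reduces to the algebraic identity $b_1(v)b_2(w)a_1(v)a_1(w) = b_1(w)b_2(v)a_2(v)a_2(w)$. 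This identity follows by combining the two consistency conditions from Theorem \ref{thm:ybequation} characterizing existence of $v*w$, after simplification via $a_i a_i^* = c_1 c_2 - b_1 b_2$; both sides collapse to $(D_v - a_1(v)a_2(v))(D_w - a_1(w)a_2(w))$ where $D_v = c_1(v)c_2(v) - b_1(v)b_2(v)$.

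Next, I verify the equality $(u*v)*w = u*(v*w)$ entry-by-entry. The $c_i$-entries are trivially multiplicative: $c_i((u*v)*w) = c_i(u) c_i(v) c_i(w) = c_i(u*(v*w))$. For $a_i$, expand $a_1((u*v)*w) = a_1(x) a_1(w) - b_1(w) b_2(x)$, substitute the formulas for $x$ and its dual, and use $b_i^* = -b_i$; the result is a trilinear expression in the entries of $u, v, w$ matching $a_1(u*(v*w))$, and $a_2$ follows by the evident symmetry. The $b_i$-entries are the most involved: using both equivalent forms in the theorem (e.g., $b_1(u*v) = a_1^*(u) b_1(v) + a_1(v) b_1(u) = a_2^*(v) b_1(u) + a_2(u) b_1(v)$), their equality reduces to a quartic identity in the eighteen entries of $u, v, w$ that follows from $a_i a_i^* = c_1 c_2 - b_1 b_2$ together with the existence conditions.

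The main obstacle is the algebra for the $b$-entries, which is lengthy but systematic once the right combination of the two equivalent forms is chosen. A more conceptual alternative would lift $[[u, x, v]] = 0$, $[[v, y, w]] = 0$, and $[[u, u*y, y]] = 0$ to identities on $V^{\otimes 4}$ and manipulate them to deduce $[[x, u*y, w]] = 0$, then invoke uniqueness from Theorem \ref{thm:ybequation} to obtain the desired equality; I expect this route to reduce to essentially the same algebraic content.
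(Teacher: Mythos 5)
Your proposal is correct and follows essentially the same route as the paper: establish definedness of both triple products via the $\Delta$-invariants attached to the factors of a Yang--Baxter triple, then verify equality entry-by-entry using the explicit composition formulas of \Cref{thm:ybequation}. In fact, your reduction of the definedness of $u*(v*w)$ to the identity $a_1(v)a_1(w)b_1(v)b_2(w)=a_2(v)a_2(w)b_1(w)b_2(v)$ — obtained by multiplying the two consistency conditions for $v*w$ and cancelling the nonzero non-free-fermionic factor $(a_1a_2-c_1c_2+b_1b_2)$ — is spelled out more carefully than in the paper, which asserts $\Delta(v^{-1})=\Delta((v*w)^{-1})$ without detail, whereas your entry-by-entry verification is sketched where the paper writes it out.
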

\begin{proof}
    Since $u*v$ and $v*w$ are defined. Then by \Cref{lem:equivalentyb} and \Cref{thm:ybequation}, 
    \[
        \Delta(u) = \Delta(v^{-1}) = \Delta((v*w)^{-1}) \quad \text{and} \quad \Delta(u*v) = \Delta(v) = \Delta(w^{-1}),
    \]
    hence, $u*(v*w)$ and $(u*v)*w$  are defined. Now we show that they are equal using the explicit formulas for the coefficients. The coefficients $c_1,c_2$ are easy. For $i=1,2$, we have
    \begin{align*}
        c_i((u*v)*w) = c_i(u*v)c_1(w) = c_i(u)c_i(v)c_i(w) = c_i(u)c_i(v*w) = c_i(u*(v*w)).
    \end{align*}
    
    For coefficients $b_1,b_2$, we use the explicit weights from \Cref{thm:ybequation}. For $i=1, j=2$ or $i=2,j=1$ we have
    \begin{align*}
        b_i((u*v)*w) &= a_i((u*v)^{-1})b_i(w) + a_i(w)b_i(u*v)\\
        &= (a_i(v^{-1})a_1(u^{-1})-b_i(u)b_j(v))b_i(w) + a_i(w)(a_1(u^{-1})b_1(v)+a_i(v)b_1(u))\\
        &= a_i(u^{-1})(a_i(v^{-1})b_i(w)+a_i(w)b_i(v)) + (a_i(v)a_i(w)-b_i(w)b_j(v))b_i(u)\\
        &= a_i(u^{-1})b_i(v*w) + a_i(v*w)b_i(u)\\
        &= b_i(u*(v*w)).
    \end{align*}
    
    Similarly, for coefficients $a_1$, we have
    \begin{align*}
        a_1((u*v)*w) &= a_1(u*v)a_1(w) + b_1(w)b_2(u*v)\\
        &= (a_1(u)a_1(v) + b_1(v)b_2(u))a_1(w) - b_1(w)(a_1(u)b_2(v) + a_1^*(v)b_2(u))\\
        &= a_1(w)a_1(u)a_1(v) - a_1(w)b_1(v)b_2(u) - a_1(u)b_1(w)b_2(v) - a_1^*(v)b_1(w)b_2(u)\\
        &= a_1(u)(a_1(v)a_1(w)-b_1(w)b_2(v)) - b_2(u)(a_1(w)b_1(v) + a_1^*(v)b_1(w))\\
        &= a_1(u)a_1(v*w) - b_1(v*w)b_2(u)\\
        &= a_1(u*(v*w)),
    \end{align*}
    and $a_2$ is dealt mutatis mutandis. Hence, the Yang-Baxter composition is associative.
\end{proof}

\begin{remark}
    In Section 2 of \cite{BBF11}, authors give an heuristic argument that a well-defined binary operation $u*v$ which solves the Yang-Baxter equation $[[u,u*v,v]] = 0$ is always associative. Our results can be seen as additional evidence to support the claim. Moreover, we prove something stronger: if $u*v$ and $v*w$ are defined, then so is $u*v*w$. 
\end{remark}

\begin{corollary}
    The set $G$ described above is a groupoid.
\end{corollary}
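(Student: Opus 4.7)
The plan is to verify each of the three groupoid axioms for the set $G$ as constructed, drawing on the structural results already established.

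For associativity, the non-trivial direction (that if $a*b$ and $b*c$ are defined then $(a*b)*c$ and $a*(b*c)$ are defined and equal) is precisely the content of Lemma \ref{lem:associativity}, once we confirm the required $\Delta$-matching. Specifically, if $a*b$ and $b*c$ are defined in $G$, then $\Delta(a) = \Delta(b^{-1})$ and $\Delta(b) = \Delta(c^{-1})$, and the formulas of Theorem \ref{thm:ybequation} together with Corollary \ref{lem:equivalentyb} guarantee $\Delta(a*b) = \Delta(c^{-1})$ and $\Delta(a) = \Delta((b*c)^{-1})$, so both triple products are defined in $G$; Lemma \ref{lem:associativity} then gives equality. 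The converse direction is immediate from the definition of the partial operation on $G$: writing $(a*b)*c$ presumes that $a*b$ is defined, and the $\Delta$-matching for the outer composition forces $\Delta(b) = \Delta(c^{-1})$, so $b*c$ is defined as well. Triple products involving identity elements $(I,(\delta_1,\delta_2))$ reduce to applications of the defining identity rules.

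For inverses, the unary operation $(u,\Delta(u)) \mapsto (u^{-1},\Delta(u^{-1}))$ preserves $S_{\nff}$ (the statistics $\Delta_1,\Delta_2$ remain well-defined and non-zero), and the $\Delta$-matching for both $(u,\Delta(u))*(u^{-1},\Delta(u^{-1}))$ and $(u^{-1},\Delta(u^{-1}))*(u,\Delta(u))$ is trivially satisfied. The Yang-Baxter identities $[[u,I,u^{-1}]]=0$ and $[[u^{-1},I,u]]=0$ established earlier show that the Yang-Baxter composition in these cases formally yields $I$, so we identify
\[
(u,\Delta(u))*(u^{-1},\Delta(u^{-1})) = (I,\Delta(u)), \qquad (u^{-1},\Delta(u^{-1}))*(u,\Delta(u)) = (I,\Delta(u^{-1})),
\]
which are exactly the right- and left-identities for $(u,\Delta(u))$ and $(u^{-1},\Delta(u^{-1}))$ respectively under the identity rules.

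For the identity axiom, suppose $(a,\Delta(a))*(b,\Delta(b))$ is defined. By associativity applied to $(a,b,b^{-1})$ together with the inverse calculation above, one has $(a*b)*b^{-1} = a*(b*b^{-1}) = a*(I,\Delta(b)) = a*(I,\Delta(a))= a$, using that $\Delta(b) = \Delta(a^{-1})$ and hence $(I,\Delta(b))$ coincides with the right-identity of $a$. The dual identity $a^{-1}*(a*b) = b$ is obtained symmetrically.

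The main obstacle is the careful tracking of which identity element $(I,(\delta_1,\delta_2))$ arises as the source or target of a given morphism; once one recognizes that $\Delta(u)$ and $\Delta(u^{-1})$ play the roles of target and source of $(u,\Delta(u))$, the $\Delta$-matching condition $\Delta(u) = \Delta(v^{-1})$ for composability becomes the usual ``target of left equals source of right,'' and the remaining verifications are bookkeeping built directly on Lemma \ref{lem:associativity} and the identity Yang-Baxter equations.
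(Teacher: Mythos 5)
Your overall strategy is the same as the paper's: associativity is delegated to \Cref{lem:associativity}, inverses to \Cref{thm:ybequation}, and the identity axiom to \Cref{lem:equivalentyb}, with the $\Delta$-bookkeeping that the paper leaves implicit carried out explicitly. The associativity part is correct: $\Delta(a*b)=\Delta(b)$ and $\Delta((b*c)^{-1})=\Delta(b^{-1})$ follow from \Cref{thm:ybequation} and \Cref{lem:equivalentyb}, so composability of both triple products reduces to the hypotheses $\Delta(a)=\Delta(b^{-1})$ and $\Delta(b)=\Delta(c^{-1})$, and \Cref{lem:associativity} finishes the argument.

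There is, however, a concrete error in your treatment of inverses that breaks the written verification of the identity axiom. With the paper's conventions, $(u,\Delta(u))$ has right identity $(I,\Delta(u))$ and left identity $(I,\Delta(u^{-1}))$; reading $\Delta(u)$ as the source and $\Delta(u^{-1})$ as the target of the morphism $u$, the composite $u*u^{-1}$ is an endomorphism of the object $\Delta(u^{-1})$, so one must set
\[
(u,\Delta(u))*(u^{-1},\Delta(u^{-1})) = (I,\Delta(u^{-1})), \qquad (u^{-1},\Delta(u^{-1}))*(u,\Delta(u)) = (I,\Delta(u)),
\]
which is the transpose of what you wrote. Relatedly, composability of $a*b$ means $\Delta(a)=\Delta(b^{-1})$, not $\Delta(b)=\Delta(a^{-1})$; these are not equivalent, since $\Delta_1(x^{-1})=\frac{a_2(x)}{a_1(x)}\Delta_1(x)$, and the latter condition is composability of $b*a$. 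With your labels the chain $a*(b*b^{-1})=a*(I,\Delta(b))$ lands on $a*(I,\Delta(a^{-1}))$, which is not defined unless $\Delta(a)=\Delta(a^{-1})$, so the final step fails as written. The repair is immediate: $b*b^{-1}=(I,\Delta(b^{-1}))=(I,\Delta(a))$ is exactly the right identity of $a$, and symmetrically $a^{-1}*a=(I,\Delta(a))$... no, $a^{-1}*a=(I,\Delta(a))$ is the left identity of $b$ since $\Delta(a)=\Delta(b^{-1})$, giving $a^{-1}*(a*b)=b$. Apart from this label transposition, the proposal is the paper's proof with the details filled in.
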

\begin{proof}
    We already proved all the pieces in the previous results.
    \begin{enumerate}
        \item (Associativity) is proved in \Cref{lem:associativity};
        \item (Inverse) It follows from \Cref{thm:ybequation};
        \item (Identity) It follows from \Cref{lem:equivalentyb}.
    \end{enumerate}
    Hence, the set $G$ is a groupoid. 
\end{proof}

Let $\pi\colon G \to S$ be the map from the groupoid to the non-free-fermionic six-vertex matrices defined by $\pi(u,\Delta(u)) = u$ by choosing any representative, and $\pi(I, (\delta_1,\delta_2)) = I$. By above, we get the following result.

\begin{theorem}
    We have the parametrized Yang-Baxter equation
    \[
        [[\pi(g), \pi(g*h), \pi(h)]] = 0, \quad g,h \in G_{\nff}.
    \]
\end{theorem}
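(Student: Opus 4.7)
The plan is to recognize that the statement is essentially built into the definition of the groupoid $G_{\nff}$ and its partial product $*$, so the proof reduces to a short case check that invokes \Cref{thm:ybequation} and the identity lemmas proved earlier in \Cref{sec:groupoid}.

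For the generic case, suppose $g = (u, \Delta(u))$ and $h = (v, \Delta(v))$ arise from matrices in $S_{\nff}$. By construction, $g*h$ is defined precisely when $\Delta(u) = \Delta(v^{-1})$ and the Yang-Baxter composition $u*v$ lies in $\PP(S^\times)$, in which case $g*h = (u*v, \Delta(u*v))$. The condition $\Delta(u) = \Delta(v^{-1})$ is exactly the compatibility condition of \Cref{thm:ybequation}(1), as recorded in the corollary that reformulates those conditions in terms of the statistics $\Delta_1$ and $\Delta_2$. Hence $u*v$ is the unique element of $\PP(S^\times)$ satisfying $[[u, u*v, v]] = 0$, and applying $\pi$ we obtain $[[\pi(g), \pi(g*h), \pi(h)]] = [[u, u*v, v]] = 0$. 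For the identity cases, if $g = (I, (\delta_1, \delta_2))$ and $g*h$ is defined, then $h = (v, \Delta(v))$ with $\Delta(v^{-1}) = (\delta_1, \delta_2)$ and $g*h = h$, so what must be checked is $[[I, v, v]] = 0$; this is one of the identity lemmas proved earlier in the section, and the symmetric situation in which $h$ is an identity is handled the same way using $[[u, u, I]] = 0$.

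There is no substantive obstacle here: the groupoid $G_{\nff}$ has been engineered precisely so that its product law encodes the existence and uniqueness clauses of \Cref{thm:ybequation}. The only subtlety worth flagging is the identification between the condition $\Delta(u) = \Delta(v^{-1})$ that governs when $g*h$ is defined and the compatibility conditions of \Cref{thm:ybequation}, but this identification has already been carried out in the corollary following that theorem. The proof can therefore be presented in a single short paragraph.
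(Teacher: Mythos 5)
Your proposal is correct and matches the paper's treatment: the paper presents this theorem as an immediate consequence of the construction of $G_{\nff}$ (the product $g*h$ is defined exactly so that $\pi(g*h)$ is the Yang--Baxter composition solving $[[\pi(g),\cdot,\pi(h)]]=0$, with the identity cases covered by the lemma giving $[[I,u,u]]=[[u,u,I]]=0$). Your unpacking of the generic and identity cases is exactly the intended argument, just written out more explicitly than the paper's ``by the above'' remark.
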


\begin{remark}
    Formally, we can include the free-fermionic matrices to exntend $G_{\nff}$ to the groupoid of all six-vertex matrices $S^\times$, but by \Cref{thm:ybequation}, the free-fermionic matrices have the defined composition only with other free-fermionic matrices. Hence, they form an isolated vertex in the groupoid. The vertex group equals exactly $\GL_2(\mathbb{C}) \times \GL_1(\mathbb{C})$ as was shown in the previous section.
\end{remark}

\appendix

\section{Proof of \Cref{thm:ybequation}}\label{section:ybproof}

Let $u,w,v \in S$. We solve the Yang-Baxter equation $[[u,w,v]] = 0$ by solving equations for matrix coefficients:
\[
    [[u,w,v]]_{i_1,i_2,i_3}^{j_1,j_2,j_3} \coloneqq \langle e_{i_1} \otimes e_{i_2} \otimes e_{i_3} \mid [[u,w,v]] \mid e_{j_1} \otimes e_{j_2} \otimes e_{j_3} \rangle = 0.
\]

All of the equations are:
\begin{align*}
    c_1(w) c_2(u) c_2(v) &= c_1(u) c_1(v) c_2(w),\\
    a_1(w) c_1(u) c_1(v)+b_1(v) b_2(u) c_1(w)&=a_1(u) a_1(v) c_1(w),\\
    a_1(w) b_1(u) c_1(v)+b_1(v) c_1(w) c_2(u)&=a_1(u) b_1(w) c_1(v),\\
    a_1(w) b_2(v) c_1(u)+b_2(u) c_1(w) c_2(v)&=a_1(v) b_2(w) c_1(u),\\
    a_1(w) b_1(u) c_2(v)+b_1(v) c_1(u) c_2(w)&=a_1(u) b_1(w) c_2(v),\\
    a_2(w) c_1(u) c_1(v)+b_1(u) b_2(v) c_1(w)&=a_2(u) a_2(v) c_1(w),\\
    a_2(w) b_1(v) c_1(u)+b_1(u) c_1(w) c_2(v)&=a_2(v) b_1(w) c_1(u),\\
    a_1(w) b_2(v) c_2(u)+b_2(u) c_1(v) c_2(w)&=a_1(v) b_2(w) c_2(u),\\
    a_1(w) c_2(u) c_2(v)+b_1(v) b_2(u) c_2(w)&=a_1(u) a_1(v) c_2(w),\\
    a_2(w) b_2(u) c_1(v)+b_2(v) c_1(w) c_2(u)&=a_2(u) b_2(w) c_1(v),\\
    a_2(w) b_1(v) c_2(u)+b_1(u) c_1(v) c_2(w)&=a_2(v) b_1(w) c_2(u),\\
    a_2(w) b_2(u) c_2(v)+b_2(v) c_1(u) c_2(w)&=a_2(u) b_2(w) c_2(v),\\
    a_2(w) c_2(u) c_2(v)+b_1(u) b_2(v) c_2(w)&=a_2(u) a_2(v) c_2(w).
\end{align*}

We set $c_1(w) = c_1(u)c_1(v)$ and $c_2(w) = c_2(u)c_2(v)$. By making this choice, we choose a representative for $[w]$. After deleting duplicates, the equations become
\begin{align*}
    a_1(u) a_1(v)=a_1(w)+b_1(v) b_2(u),\\
    a_2(u) a_2(v)=a_2(w)+b_1(u) b_2(v),\\
    a_1(w) b_1(u)+b_1(v) c_1(u) c_2(u)=a_1(u) b_1(w),\\
    a_1(w) b_2(v)+b_2(u) c_1(v) c_2(v)=a_1(v) b_2(w),\\
    a_2(w) b_1(v)+b_1(u) c_1(v) c_2(v)=a_2(v) b_1(w),\\
    a_2(w) b_2(u)+b_2(v) c_1(u) c_2(u)=a_2(u) b_2(w).
\end{align*}
We get $a_1(w) = a_1(u)a_1(v) - b_1(v)b_2(u)$ and $a_2(w) = a_2(u)a_2(v) - b_1(u)b_2(v)$. The remaining equations become
\begin{align*}
    a_1(u) a_1(v) b_1(u)+b_1(v) c_1(u) c_2(u)=a_1(u) b_1(w)+b_1(u) b_1(v) b_2(u),\\
    a_1(u) a_1(v) b_2(v)+b_2(u) c_1(v) c_2(v)=a_1(v) b_2(w)+b_1(v) b_2(u) b_2(v),\\
    a_2(u) a_2(v) b_1(v)+b_1(u) c_1(v) c_2(v)=a_2(v) b_1(w)+b_1(u) b_1(v) b_2(v),\\
    a_2(u) a_2(v) b_2(u)+b_2(v) c_1(u) c_2(u)=a_2(u) b_2(w)+b_1(u) b_2(u) b_2(v).
\end{align*}

Recall that we define $c_1(v)c_2(v) - b_1(v)b_2(v) = a_1(v)a_1^*(v) = a_2(v)a_2^*(v)$ for any $v \in S$. Then the equations become 
\begin{align*}
    b_1(w) &= a_1^*(u)b_1(v) + a_1(v)b_1(u) = a_1^*(u)b_1(v) - a_1(v)b_1^*(u)\\
    &= a_2^*(v)b_1(u) + a_2(u)b_1(v) = a_2^*(v)b_1(u) - a_2(u)b_1^*(v),\\
    b_2(w) &= a_1^*(v)b_2(u) + a_1(u)b_2(v) = a_1^*(v)b_2(u) - a_1(u)b_2^*(v)\\
    &= a_2^*(u)b_2(v) + a_2(v)b_2(u)= a_2^*(u)b_2(v) - a_2(v)b_2(u^*).
\end{align*}
For consistency of the solution, expressions for $b_1(w)$ and $b_2(w)$ should agree. We write these conditions as follows:
\begin{align*}
    (a_1(v)-a_2^*(v))b_1(u) &= (a_2(u)-a_1^*(u))b_1(v),\\
    (a_1(u)-a_2^*(u))b_2(v) &= (a_2(v)-a_1^*(v))b_2(u).
\end{align*}

The proof of the other cases follows from \Cref{lem:equivalentyb}.

\section{Algebraic structure of the Yang-Baxter equation}
In this section we consider the general algebraic structure of the solutions of the Yang-Baxter equations. We will apply the results of this section for the six-vertex matrices in the next section. Note that the results are applicable to any solutions of the Yang-Baxter equations, not only for the six-vertex matrices.

Recall that a \textit{magmoid} is a set $M$ with a partial binary operation $*\colon M \times M \to M$. A magmoid is called \textit{quasiassociative} if the following holds: if $u*v$ and $v*w$ exist, then if either $(u*v)*w$ or $u*(v*w)$ exist, then so does the other, and the two are equal. A magmoid is called \text{associative} if the following holds: if $u*v$ and $v*w$ exist, then both $(u*v)*w$ and $u*(v*w)$ exist, and they are equal. Let $^{-1}\colon M \to M$ be an unary operation. A magmoid is called \textit{involution magmoid} if $(u^{-1})^{-1} = u$ and if $u*v$ exists, then $v^{-1}*u^{-1}$ exists and equal to $(u*v)^{-1}$. An involution magmoid is called \textit{invertible} if when $u*v$ exists, then $(u*v)*v^{-1} = u$ and $u^{-1}*(u*v) = v$. The invertible magmoid can also be called \textit{invertible quasigroupoid}. A magmoid is called \textit{unital} if there is a unique element $I \in M$ such that $u*I$ and $I*u$ are defined for all $u \in M$, and $u*I = u$ and $I*u = u$. 

Let $A \subset \End(V \otimes V)$ be a set with the following properties:
\begin{enumerate}
    \item If $u,v \in A$, there there exists a unique (up to a scalar multiple) solution $w \in A$ of the Yang-Baxter equation $[[u,w,v]] = 0$.
    \item Every $a \in A$ is invertible, and $a^{-1} \in A$.
\end{enumerate}

Thanks to uniqueness, we have a well-defined partial binary operation on the projective endomorphisms of $A$ that we call \textit{the Yang-Baxter composition}. It is a partial operation
\[
    *\colon \PP(A) \times \PP(A) \to \PP(A)
\]
given by $(u, v) \mapsto w = u*v \in \PP(A)$ whenever $[[u,w,v]] = 0$. See Section 2 of \cite{BBF11} for details. Note that if $w \not\in \PP(A)$, we leave the composition $u*v$ undefined. In particular, if $I \not\in A$, then despite $[[u,I,u^{-1}]] = 0$, we \textbf{do not} set $u*u^{-1} = [I]$ or $u^{-1}*u = [I]$ because $[I] \not\in \PP(A)$. 

\begin{proposition}
    The set $A$ satisfying the properties above forms an invertible magmoid $(A, *, ^{-1})$. If $I \in A$, then $A$ is unital as well. 
\end{proposition}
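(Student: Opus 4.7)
The plan is to verify each axiom of an invertible magmoid (and, when $I\in A$, of a unital one) by translating it into one of the Yang-Baxter equations and invoking the equivalences of Corollary~\ref{lem:equivalentyb} together with the identity-type Yang-Baxter equations $[[I,u,u]]=[[u,u,I]]=[[u,I,u^{-1}]]=0$ established earlier in this section. Since $*$ is defined exactly when a Yang-Baxter equation has a (necessarily unique) solution in $A$, each axiom reduces to the statement ``the equivalent Yang-Baxter equation is solvable in $A$, and its unique solution is the one claimed.'' Closure of $A$ under matrix inversion (hypothesis~2) guarantees that the matrices appearing on both sides of these equivalent equations do lie in $A$.

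For the involution axiom, $(u^{-1})^{-1}=u$ is a matrix triviality, while if $u*v=w$, i.e.\ $[[u,w,v]]=0$, then Corollary~\ref{lem:equivalentyb} yields $[[v^{-1},w^{-1},u^{-1}]]=0$, which says exactly that $v^{-1}*u^{-1}$ is defined and equals $w^{-1}=(u*v)^{-1}$. For invertibility, the same starting equation is equivalent to $[[w,u,v^{-1}]]=0$ and to $[[u^{-1},v,w]]=0$ by the same corollary; these read $w*v^{-1}=u$ and $u^{-1}*w=v$, i.e.\ $(u*v)*v^{-1}=u$ and $u^{-1}*(u*v)=v$. For the unital claim, if $I\in A$ then $[[I,u,u]]=0$ and $[[u,u,I]]=0$ immediately give $I*u=u=u*I$ for every $u\in A$ by uniqueness, and uniqueness of such an $I$ follows from the standard computation $J = I*J = I$ for any second two-sided identity $J$.

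The main obstacle is purely bookkeeping: the operation $*$ lives on $\PP(A)$, so one must check that the matrix identities above descend unambiguously to projective representatives. This is automatic because Corollary~\ref{lem:equivalentyb} is a statement about honest matrix equations, and every identity we need is derived by rewriting a single matrix equation in the equivalent form that directly realizes the desired composition; the ``unique up to scalar'' clause in hypothesis~(1) is precisely what is needed so that the resulting classes in $\PP(A)$ are well-defined.
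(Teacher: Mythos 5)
Your proposal is correct and follows essentially the same route as the paper: both reformulate the equivalences of \Cref{lem:equivalentyb} (together with the identity-type equations $[[I,u,u]]=[[u,u,I]]=0$) as the magmoid axioms, using the uniqueness hypothesis to identify the known solution of each equivalent Yang-Baxter equation with the corresponding composition. Your write-up is in fact slightly more complete than the paper's, since you explicitly treat the unital case and the descent to projective classes.
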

\begin{proof}
    We reformulate \Cref{lem:equivalentyb} in terms of the Yang-Baxter equation. If $u*v$ is defined, then we have the following identities:
    \begin{enumerate}
        \item $(u*v)^{-1} = v^{-1}*u^{-1}$
        \item $(u*v)*v^{-1} = u$
        \item $u^{-1}*(u*v) = v$
        \item $v*(u*v)^{-1} = u^{-1}$
        \item $(u*v)^{-1}*u = v^{-1}$
    \end{enumerate}
    Hence, $A$ is an invertible magmoid. 
\end{proof}

As we have shown in the previous section, the set of the six-vertex matrices $S^\times$ satisfies the conditions of the proposition. Moreover, $S^\times$ is associative, hence, forms an associative invertible magmoid. It is always possible to complete a (typically non-unital) invertible magmoid to make it a groupoid by adding the necessary units as we did in the previous section for the six-vertex matrices. We formulate the following conjecture that extends the conjecture from Section 2 of \cite{BBF11}.

\begin{conjecture}\label{conj:associativity}
    Let $A \subset \End(V \otimes V)$ be a set with the following properties:
    \begin{enumerate}
        \item If $u,v \in A$, there there exists a unique (up to a scalar multiple) solution $w \in A$ of the Yang-Baxter equation $[[u,w,v]] = 0$.
        \item Every $a \in A$ is invertible, and $a^{-1} \in A$.
    \end{enumerate}
    
    Then $(A, *, ^{-1})$ is an associative invertible magmoid. When completed with the units, $(A, *, ^{-1})$ forms a groupoid.
\end{conjecture}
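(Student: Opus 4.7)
The plan has three stages: the invertible-magmoid identities, associativity (the main hurdle), and the groupoid completion.

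For the invertible-magmoid part I would read the identities off from Corollary \ref{lem:equivalentyb}. Given $u, v \in A$ and $w = u * v$, the equation $[[u, w, v]] = 0$ is equivalent to each of $[[w, u, v^{-1}]] = 0$, $[[u^{-1}, v, w]] = 0$, $[[v^{-1}, w^{-1}, u^{-1}]] = 0$, and the other permutations listed there. Since $A$ is closed under inversion and every YBE $[[X, Y, Z]] = 0$ with $X, Z \in A$ has a unique projective solution $Y \in A$ by hypothesis, these equivalences upgrade directly to composition identities: $w * v^{-1} = u$, $u^{-1} * w = v$, $(u * v)^{-1} = v^{-1} * u^{-1}$, and so on. This yields all five invertible-magmoid identities in a single sweep, and unitality when $I \in A$ follows the same way from $[[I, u, u]] = 0$ and $[[u, u, I]] = 0$.

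The central question is associativity. Suppose $p = u * v$ and $q = v * w$ both exist in $A$. The plan is to work on $V^{\otimes 4}$ and run a Zamolodchikov-style rearrangement: attach copies of $u, v, w, p, q$ to appropriate pairs of strands and apply the three-strand YBEs $[[u, p, v]] = 0$ and $[[v, q, w]] = 0$ repeatedly to shuffle the factors into two distinct normal forms. One normal form isolates a missing factor equal to $p * w$, the other isolates a missing factor equal to $u * q$; the existence of both rearrangements, combined with the uniqueness-of-solutions hypothesis applied to a common intermediate YBE, forces $p * w = u * q$ projectively, which is exactly $(u * v) * w = u * (v * w)$. Along the way one recovers the stronger statement of \Cref{lem:associativity} that the existence of $p$ and $q$ already forces the existence of the triple product.

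The main obstacle, and the reason this is posed as a conjecture rather than a theorem, is that these intermediate rearrangements on $V^{\otimes 4}$ tacitly require the existence of further YBE compositions (for instance $u * v * w$ as a bona-fide element) which need not a priori lie in $A$, and uniqueness-up-to-scalar must be wielded carefully so that independent scalars do not accumulate at different stages of the rearrangement. A natural attack is to enlarge $A$ inside $\PP\End(V \otimes V)$ to a subset $\widetilde{A}$ closed under all YBE compositions that arise in the hexagon, prove associativity on $\widetilde{A}$, and restrict back; this is the abstract analogue of what the explicit coefficient formulas of \Cref{thm:ybequation} accomplish for the six-vertex case. Once associativity is secured, completion to a groupoid is routine: partition $A$ into vertex groups by the compatibility invariant (in the six-vertex setting this was the pair $\Delta(u)$), adjoin one formal unit per vertex, and verify the three groupoid axioms using the already-proven invertible-magmoid identities together with associativity, mirroring the explicit construction of $G_{\nff}$ in \Cref{sec:groupoid}.
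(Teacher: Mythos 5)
The statement you are proving is labelled a \emph{conjecture} in the paper, and the paper does not prove it; it only proves the invertible-magmoid part (the Proposition in the appendix, which reads the identities $(u*v)*v^{-1}=u$, $u^{-1}*(u*v)=v$, $(u*v)^{-1}=v^{-1}*u^{-1}$, etc.\ off from \Cref{lem:equivalentyb} exactly as you do in your first stage) and proves associativity only for the concrete six-vertex set $S_{\nff}$ by brute-force manipulation of the coefficient formulas of \Cref{thm:ybequation} (that is \Cref{lem:associativity}). Your first stage is therefore correct and coincides with the paper's argument. Your second stage, however, is not a proof of anything: the Zamolodchikov-style rearrangement on $V^{\otimes 4}$ is precisely the heuristic argument of Brubaker--Bump--Friedberg that the paper cites and explicitly declines to treat as a proof. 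The concrete failure point is the one you yourself name: the rearrangement requires intermediate YBE compositions (e.g.\ a bona fide element playing the role of $u*v*w$) whose existence in $A$, and whose uniqueness up to a \emph{single coherent} scalar across the several applications of hypothesis (1), is exactly what is being conjectured. Passing to an enlargement $\widetilde{A}$ closed under all compositions arising in the hexagon does not help, because hypothesis (1) (unique solvability within $A$) is not known to persist on $\widetilde{A}$, so uniqueness can no longer be invoked to identify the two normal forms.

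In short: stage one is a correct proof of the invertible-magmoid claim and matches the paper; stages two and three are a proof \emph{strategy} whose missing step is the content of \Cref{conj:associativity} itself. What can actually be established in the general setting is only what the paper's appendix Proposition states. If you want an unconditional result, you must restrict to a class where the composition is given by explicit formulas --- as the paper does for $S_{\nff}$ in \Cref{sec:groupoid}, where associativity (and the stronger statement that existence of $u*v$ and $v*w$ forces existence of the triple product) is verified coefficient by coefficient.
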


We note that in Section 2 of \cite{BBF11}, the authors only conjecture that $(u*v)*w = u*(v*w)$ assuming that $u*v, v*w, (u*v)*w$, and $u*(v*w)$ are defined in the first place. We conjecture that if $u*v$ and $v*w$ are defined, then both $(u*v)*w$ and $u*(v*w)$ exist, and they are equal.

\bibliographystyle{alphaurl}
\bibliography{ybgroupoid.bib}

\end{document}